\newcommand{\comment}[1]{}
\newcommand{\pa}{\partial}
\definecolor{airforce}{rgb}{0.36, 0.74, 0.86}
 	\definecolor{blue-green}{rgb}{0.0, 0.57, 0.87}
\newcommand{\s}{{\sigma}}
\renewcommand{\Im}{\operatorname{Im}}
\newcommand{\Id}{\mathds{1}}
\newcommand{\opbw}{{Op^{\mathrm{BW}}}}
\newcommand{\gr}[1]{\textbf{#1}}
\providecommand{\vect}[2]{{\bigl[\begin{smallmatrix}#1\\#2\end{smallmatrix}\bigr]}}   
\providecommand{\sm}[4]{{\bigl[\begin{smallmatrix}#1&#2\\#3&#4\end{smallmatrix}\bigr]}}
\newtheorem{theorem}{Theorem}[section]
\newtheorem*{thm*}{Theorem}
\newtheorem{proposition}[theorem]{Proposition}
\newtheorem{lemma}[theorem]{Lemma}
\newtheorem*{cor*}{Corollary}
\newtheorem{remark}[theorem]{Remark}
\newtheorem{definition}[theorem]{Definition}
\numberwithin{equation}{section}
\newcommand{\ii}{{\rm i}}
\def\U{\underline{{\bf U}}}
\newcommand{\x}{\xi}
\newcommand{\ov}{\overline}
\newcommand{\C}{{\mathbb C}}
\newcommand{\N}{{\mathbb N}}
\newcommand{\R}{{\mathbb R}}
\newcommand{\T}{{\mathbb T}}
\newcommand{\Z}{{\mathbb Z}}
\newcommand{\cA}{{\mathcal A}}
\newcommand{\cC}{{\mathcal C}}
\newcommand{\cH}{{\mathcal H}}
\newcommand{\cP}{{\mathcal P}}
\newcommand{\nnorm}[1]{{\left\vert\kern-0.25ex\left\vert\kern-0.25ex\left\vert #1 
    \right\vert\kern-0.25ex\right\vert\kern-0.25ex\right\vert}}
\providecommand{\vect}[2]{{\bigl[\begin{smallmatrix}#1\\#2\end{smallmatrix}\bigr]}} 
\providecommand{\sm}[4]{{\bigl[\begin{smallmatrix}#1&#2\\#3&#4\end{smallmatrix}\bigr]}}
\renewcommand{\tocsection}[3]{%
\indentlabel{\@ifnotempty{#2}{\bfseries\ignorespaces#1 #2\quad}}\bfseries#3}
\def\l@subsection{\@tocline{2}{0pt}{2.5pc}{5pc}{}}
\def\l@subsubsection{\@tocline{3}{0pt}{4.5pc}{5pc}{}}
\renewcommand\tocchapter[3]{%
  \indentlabel{\@ifnotempty{#2}{\ignorespaces#2.\quad}}#3%
}
\begin{document} 
 
\title
{On the quasilinear Schr\"odinger equations on tori.}
\date{}

\author{Felice  Iandoli}
\address{\scriptsize{Dipartimento di Matematica ed Informatica, 
Universit\`a della Calabria, Ponte Pietro Bucci,  87036, Rende, Italy}}
\email{felice.iandoli@unical.it}

\keywords{Quasilinear Schr\"odinger, local well posedness, energy method, paradifferential calculus} 

\subjclass[2010]{35G55, 35A01, 35M11, 35S50   }

   
\begin{abstract}    
We improve the result by Feola and Iandoli [J. de Math. Pures et App., 157:243--281, 2022], showing that quasilinear Hamiltonian Schr\"odinger type equations are well posed on $H^s(\T^d)$ if $s>d/2+3$. We exploit the sharp paradifferential calculus on $\T^d$ developed by Berti, Maspero and Murgante [J. Dynam. Differential Equations, 33(3):1475--1513, 2021].
\end{abstract}  
    
\maketitle

\setcounter{tocdepth}{1}
\tableofcontents


\section{Introduction}
In this paper we  study the local in time solvability of the Cauchy problem associated to the following quasilinear  Schr\"odinger equation
\begin{equation}\label{NLS}
\ii u_{t}+(\pa_{\ov{u}}F)(u,\nabla u)-\sum_{j=1}^{d}\pa_{x_j}\big(\pa_{\ov{u}_{x_{j}}}F\big)(u,\nabla u)=0 \,,
\end{equation}
where $
u=u(t,x)\,$ and $ 
 x=(x_1,\ldots,x_{d})\in \mathbb{T}^{d}:=(\mathbb{R}/2\pi \mathbb{Z})^{d}
 $
and 
where we denoted $\pa_{u}:=(\pa_{{\rm Re}(u)}-\ii \pa_{{\rm Im}(u)})/2$ and  
$\pa_{\bar{u}}:=(\pa_{{\rm Re}(u)}+\ii \pa_{{\rm Im}(u)})/2$
the Wirtinger derivatives. The function
$F(y_0,y_1,\ldots,y_{d})$  is a \emph{real} valued polynomial in the complex variables $(y_0,\ldots,y_d)\in \C^{d+1}$. 
   Here $
 \nabla u=(\pa_{x_{1}} u, \ldots, \pa_{x_d}u)
 $ is the gradient. 
 Notice that equation \eqref{NLS} is \emph{Hamiltonian}, i.e.
 \begin{equation}\label{HamNLS}
 u_{t}=\ii \nabla_{\bar{u}}H(u,\bar{u})\,,
\quad 
H(u,\bar{u}):=\int_{\mathbb{T}^{d}}F(u,\nabla u) dx\,,
\end{equation}
where $\nabla_{\bar{u}}:=(\nabla_{{\rm Re}(u)}-\ii\nabla_{{\rm Im}(u)})/2$
and $\nabla$ denote the $L^{2}$-gradient. 
We assume  that the function $F$, defining the non-linearity, satisfies 
following \emph{ellipticity condition}:
there exists  $\mathtt{c}>0$
such that 
for any $\x=(\x_1,\ldots,\x_{d})\in \mathbb{R}^{d}$, 
$y=(y_0,\ldots,y_d)\in \mathbb{C}^{d+1}$
one has
\begin{equation}\label{elli}
\sum_{j,k=1}^{d}\x_{j}\x_{k}\pa_{y_j}
\pa_{\ov{y}_{k}}F(y)
-\Big|\sum_{j,k=1}^{d}\x_{j}\x_{k}\pa_{\ov{y}_j}\pa_{\ov{y}_k}F(y) 
\Big|
\geq \mathtt{c}|\xi|^2\,.
\end{equation}
Note that this condition  implies that 
\begin{align*}
\sum_{j,k=1}^{d}\x_{j}\x_{k}\pa_{y_j}
\pa_{\ov{y}_{k}}F(y)&\geq \mathtt{c}|\xi|^2,\\
\Big(\sum_{j,k=1}^{d}\x_{j}\x_{k}\pa_{y_j}
\pa_{\ov{y}_{k}}F(y)\Big)^2
-\Big|\sum_{j,k=1}^{d}\x_{j}\x_{k}\pa_{\ov{y}_j}\pa_{\ov{y}_{k}}F(y) 
\Big|^2
&\geq \mathtt{c}|\xi|^2,
\end{align*}
which are the assumptions made in the paper \cite{FIJMPA}.
The main result of this paper is the following local well posedness theorem on Sobolev spaces (see Section \ref{para}).
\begin{theorem}{\bf (Local well-posedness).}\label{main}
Let $s\geq \frak{s}_0> d/2+3$,
consider the equation \eqref{NLS}
with initial condition $u(0,x)=u_0(x)$ in $H^s(\mathbb{T}^d;\mathbb{C})$,  and with $F$ satisfying \eqref{elli}.
There exists a time 
$0<T=T(\|u_0\|_{\frak{s}_0})$ 
and a unique solution 
$
u(t,x)$ of \eqref{NLS} in $C^0([0,T),H^s)
\cap C^1([0,T),H^{s-2}).$  
Moreover the solution map $u_0(x)\mapsto u(t,x)$ is continuous with respect to the $H^s$ topology for any $t$ in $[0,T)$.
\end{theorem}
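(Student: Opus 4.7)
The strategy is the by-now standard quasilinear paradifferential scheme: paralinearize \eqref{NLS}, symmetrize its principal part via a bounded change of variable, derive an \emph{a priori} $H^{s}$ energy estimate, and finally obtain existence, uniqueness and continuous dependence by a regularization--Bona--Smith argument.

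First I would paralinearize. Setting $U=(u,\bar u)^{\top}$ and using Bony's paraproduct decomposition together with the sharp Weyl--paradifferential calculus on $\T^{d}$ of Berti--Maspero--Murgante, equation \eqref{NLS} becomes a $2\times 2$ system
\[
\partial_{t}U=\ii E\,\opbw\!\bigl(A_{2}(U;x,\xi)+A_{1}(U;x,\xi)\bigr)U+R(U)U,
\]
where $E=\diag(1,-1)$, $A_{2}$ is a matrix symbol of order $2$ built from the second Wirtinger derivatives of $F(u,\nabla u)$, $A_{1}$ is of order $1$, and $R(U)$ is a smoothing remainder controlled by $\|U\|_{H^{\frak s_{0}}}$. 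The ellipticity hypothesis \eqref{elli} ensures that $A_{2}$ is strictly positive-definite uniformly in $U$ on any ball of $H^{\frak s_{0}}$.

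Second I would symmetrize. One constructs a parabounded invertible change of variable $V=\opbw(\Phi(U;x,\xi))U$ of order $0$, whose symbol $\Phi$ is chosen so that, modulo symbols of order $\leq 1$, the conjugated principal symbol becomes the scalar $|\xi|^{2}\,\Id_{2}$. The existence of $\Phi$ is a matrix square-root computation in the Weyl calculus; the ellipticity \eqref{elli} guarantees the square root depends smoothly on $U$, while the Hamiltonian structure of \eqref{NLS} forces the surviving sub-principal symbol to be self-adjoint plus an order-zero imaginary part bounded in terms of $\|U\|_{H^{\frak s_{0}}}$. The sharp commutator and composition estimates on $\T^{d}$ of Berti--Maspero--Murgante are what make this step work at the threshold $s>d/2+3$, as they save one derivative over the standard paradifferential calculus used in \cite{FIJMPA}.

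Third I would close the energy estimate. In the symmetrized variable $V$, which has $H^{s}$ norm equivalent to that of $U$, the equation reads
\[
\partial_{t}V=\ii E\,|D|^{2}V+\opbw(B_{1}(U;t,x,\xi))V+R'(U)V,
\]
whose principal part is $L^{2}$-skew-adjoint; then $\tfrac{d}{dt}\|V\|_{H^{s}}^{2}$ picks up only contributions from $B_{1}$ and $R'$, and the sharp G\aa rding and commutator inequalities give $\tfrac{d}{dt}\|V\|_{H^{s}}^{2}\leq C(\|U\|_{H^{\frak s_{0}}})\|V\|_{H^{s}}^{2}$, whence Gr\"onwall yields a time $T=T(\|u_{0}\|_{\frak s_{0}})$ of uniform $H^{s}$ control. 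Existence follows by a regularization--compactness scheme (frequency truncation or vanishing viscosity $-\nu\lap^{2}$), for which the above estimate is uniform, producing a solution in $L^{\infty}([0,T),H^{s})$; uniqueness is obtained by applying the same energy inequality to the difference of two solutions in a lower norm, and continuity of the flow in $H^{s}$ by a Bona--Smith approximation that upgrades weak-$\star$ to strong continuity. The $C^{0}([0,T),H^{s})\cap C^{1}([0,T),H^{s-2})$ regularity is then read off directly from \eqref{NLS}. The main obstacle is the symmetrization step in higher dimension: one must algebraically square-root a matrix-valued elliptic symbol depending on $\nabla u$ while preserving the Hamiltonian (skew-adjoint) structure, and it is precisely the sharp calculus on $\T^{d}$ of Berti--Maspero--Murgante that allows this to be carried out without the extra derivative losses which forced the larger regularity threshold in \cite{FIJMPA}.
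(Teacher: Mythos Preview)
Your broad strategy (paralinearize, diagonalize the principal part, energy estimate, vanishing viscosity for existence, Bona--Smith for continuity) is the paper's, but the central step is mis-stated in a way that breaks the argument precisely at the regularity threshold you are trying to reach.

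In step~2 you claim that a bounded change of variable $V=\opbw(\Phi)U$ of order $0$ reduces the principal symbol to the constant-coefficient scalar $|\xi|^{2}\Id_{2}$. This is impossible: conjugation by an order-$0$ symbol acts on the principal matrix symbol by the similarity $\Phi(EA_{2})\Phi^{-1}$, which preserves the eigenvalues $\pm\lambda(x,\xi)|\xi|^{2}$ with $\lambda=\sqrt{\widetilde a_{2}^{\,2}-|\widetilde b_{2}|^{2}}$. In dimension $d\geq 2$ the symbol $\lambda$ is genuinely $(x,\xi)$-dependent (it involves $\xi_{j}\xi_{k}/|\xi|^{2}$ and $\nabla u$), so the best you can obtain is the diagonal form $E\,\lambda(x,\xi)|\xi|^{2}$. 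With that variable-coefficient principal part, your step-3 estimate in the \emph{standard} $H^{s}$ norm does not close: the commutator $[\opbw(|\xi|^{2s}),\opbw(\lambda|\xi|^{2})]$ has principal symbol $\{|\xi|^{2s},\lambda|\xi|^{2}\}$, of order $2s+1$ and not zero, and no G\aa rding argument removes a genuine loss of one derivative. The paper does \emph{not} change unknown (that is the \cite{FIJMPA} route, explicitly set aside here as costing more regularity); instead it keeps $U$ and replaces $\|\cdot\|_{s}$ by the modified energy $\|U\|_{\U,s}^{2}=\langle\opbw(\lambda^{s}|\xi|^{2s})\cC U,\cC U\rangle$ with $\cC=(\Id-\opbw(C))\opbw(S^{-1})$. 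The key cancellation is then purely algebraic: $\{\lambda^{s}|\xi|^{2s},\lambda|\xi|^{2}\}\equiv 0$ because both symbols are functions of $\lambda|\xi|^{2}$. A secondary point you omit is that, since here the diagonalizing matrix $S$ depends on $\xi$ (unlike in \cite{BMM1} or in the special case of Theorem~\ref{vjj}), a lower-order correction $S_{*}$ is required so that $\opbw(S+S_{*})\opbw(S^{-1})=\Id+R_{-2}$; without it the parametrix remainder gains only one derivative and the scheme does not close at $\frak s_{0}>d/2+3$.
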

Theorem \ref{main} improves the results in \cite{FIJMPA, FIHP} in terms of the required regularity on the initial datum. Furthermore the time of existence of the solutions depends only on the low norm $\|\cdot\|_{\frak{s}_0}$. The regularity threshold $d/2+3$ is the same required by Marzuola, Metcalfe and Tataru in \cite{MMT3} (which improves the pioneering result by Kenig, Ponce and Vega \cite{KPV}) for general quasilinear Schr\"odinger equations on $\R^d$. Moreover, as in \cite{MMT3}, when the function defining the Hamiltonian has the special form $F(u,\nabla u)=|\nabla h(|u|^2)|^2$ we can improve the result requiring only $s>d/2+2$. We have the following.
\begin{theorem}\label{vjj}
If, in addition to the hypotheses of Theorem \ref{main}, one has $F(u,\nabla u)=|\nabla h(|u|^2)|^2$ for some polynomial function $h:\C\rightarrow \C$, then the result of Theorem \ref{main} holds true for any $\frak{s}_0>d/2+2$.
\end{theorem}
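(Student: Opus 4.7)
The plan is to run the same paradifferential scheme as in the proof of Theorem \ref{main}, exploiting the gauge structure of $F(u,\nabla u)=|\nabla h(|u|^2)|^2$ to eliminate the first-order paradifferential contribution via an $L^2$-unitary conjugation; this removes exactly one derivative from the regularity threshold. First I would compute $\nabla h(|u|^2)=h'(|u|^2)\nabla(|u|^2)$, so that $F(u,\nabla u)=h'(|u|^2)^2\,|\nabla(|u|^2)|^2$. Inserting this into \eqref{NLS} and performing the Wirtinger differentiations, one checks that the resulting quasilinear Schr\"odinger equation has second-order coefficients that are \emph{algebraic} in $(u,\bar u)$ alone, with no $\nabla u$ dependence; only the sub-principal and lower-order terms involve $\nabla u$. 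This is the structural difference with respect to the general case of Theorem \ref{main} and is what ultimately costs one derivative less in the threshold.

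Paralinearizing the system for $U=(u,\bar u)^\top$ via the Bony--Weyl calculus on $\T^d$ of Berti--Maspero--Murgante, one obtains
$$\ii\partial_t U=\opbw(A_2(U;x,\xi))U+\opbw(A_1(U,\nabla U;x,\xi))U+R(U)U,$$
with $A_2$ a self-adjoint matrix symbol of order $2$ depending only on $U$, $A_1$ of order $1$ linear in $\nabla U$, and $R(U)$ of order $0$. The key algebraic identity is that, for this special $F$, the sub-principal symbol has coboundary form $A_1=\tfrac{1}{2}\{A_2,\Phi(U)\sigma_3\}$, with $\sigma_3=\mathrm{diag}(1,-1)$ and $\Phi(U)=\psi(|u|^2)$ for a primitive $\psi$ of $h'(r)^2$; this reflects the gauge invariance $u\mapsto e^{\ii\alpha}u$ of the Hamiltonian. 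The $L^2$-unitary conjugation $V:=\opbw(e^{\ii\Phi(U)\sigma_3})U$ then eliminates the first-order contribution and reduces the system to
$$\ii\partial_t V=\opbw(A_2(U;x,\xi))V+\widetilde R(U)V,$$
with $\widetilde R(U)$ of order $0$ bounded by $\|U\|_{\mathfrak{s}_0}$.

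Energy estimates using the ellipticity \eqref{elli} then yield $\tfrac{d}{dt}\|V\|_{H^s}^2\le C(\|U\|_{\mathfrak{s}_0})\|V\|_{H^s}^2$, and the boundedness and invertibility of the gauge conjugation on $H^s$ transfer this bound to $\|U\|_{H^s}$. Because every paralinearized coefficient now depends only on $U$ and not on its derivatives, the Sobolev embedding used to control the low norm $C(\|U\|_{\mathfrak{s}_0})$ needs only two derivatives beyond $d/2$, giving the improved threshold $\mathfrak{s}_0>d/2+2$. Uniqueness and continuous dependence on the initial datum follow verbatim from the scheme of Theorem \ref{main}. The main obstacle is the verification of the gauge identity $A_1=\tfrac{1}{2}\{A_2,\Phi(U)\sigma_3\}$: this is an algebraic computation involving the Wirtinger derivatives of $F$, using the chain rule applied to $h(|u|^2)$ together with the symmetry of the resulting second-order coefficients. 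Once this identity is established, the rest of the argument is the proof of Theorem \ref{main} with this additional reduction step in place and $\mathfrak{s}_0$ lowered by one accordingly.
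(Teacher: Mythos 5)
Your approach is genuinely different from the paper's, and in its current form it contains unverified claims that I do not believe can be made to work as stated. The paper's proof of Theorem~\ref{vjj} does not perform any gauge transformation: it simply traces through the proof of Theorem~\ref{main} and observes, step by step (Remarks~\ref{C1}--\ref{C5}), that for $F(u,\nabla u)=|\nabla h(|u|^2)|^2$ the paralinearized symbols simplify. Concretely, from \eqref{simboa2colin} the coefficients $a_2(x)$ and $b_2(x)$ depend only on $u,\bar u$ and not on $\nabla u$ or $\xi$; hence the diagonalizing matrix $S$ in \eqref{transC} is $\xi$-independent, the lower-order correction $S_*$ in \eqref{simboliAA11} is unnecessary, the off-diagonal first-order symbol $b_1^+$ in Proposition~\ref{prop:diago2} vanishes, Lemma~\ref{diago1} is not needed, and the modified energy reduces to $\cC=\opbw(S^{-1}(x,\xi))$. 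Each of these simplifications replaces a seminorm bound $\mathtt{C}(\|\U\|_{p+1})$ by $\mathtt{C}(\|\U\|_{p})$, which propagates through Lemma~\ref{iterativo} and yields $\frak{s}_0>d/2+2$. No new structural input is used beyond what is already in the general scheme.

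Your proposal instead hinges on the asserted coboundary identity $A_1=\tfrac{1}{2}\{A_2,\Phi(U)\sigma_3\}$ with $\Phi(U)=\psi(|u|^2)$, which you never verify and which I doubt holds in this form. From \eqref{simboa2colin} we have $a_1(x,\xi)=[h'(|u|^2)]^2\sum_j\Im(u\,\ov{u}_{x_j})\xi_j$, while the Poisson bracket $\{a_2(x)|\xi|^2,\psi(|u|^2)\}=2a_2(x)\,\xi\cdot\nabla_x\psi(|u|^2)$ produces a factor $\Re(\ov{u}\nabla u)$ rather than $\Im(u\ov{\nabla u})$; the $\sigma_3$ and the matrix structure do not obviously reconcile the two. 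Separately, the proposed reduction ``$\ii\partial_tV=\opbw(A_2(U;x,\xi))V+\widetilde R(U)V$'' glosses over the fact that $\opbw(e^{\ii\Phi\sigma_3})$ does not commute with $\opbw(A_2)$ at principal order, since $A_2$ has nonzero off-diagonal entries $b_2,\ov{b_2}$; the matrix commutator $[e^{\ii\Phi\sigma_3},EA_2]$ contributes at order two, not order one, and cancelling it requires the compensating transformation of the coefficient $b_2(U)\mapsto b_2(V)$ under the gauge change of the unknown \emph{inside} the symbol, a nontrivial paralinearization argument which your outline omits. Finally, even granting the gauge elimination of $A_1$, your explanation of why the threshold drops by one (``every paralinearized coefficient now depends only on $U$'') is precisely the observation that already powers the paper's bookkeeping proof without any gauge transformation; in the paper the diagonal first-order piece $a_1^+$ is not eliminated at all but is kept, being real and producing a skew-adjoint contribution in the energy estimate. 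So the gauge step is not the mechanism behind the gain of one derivative, and the mechanism it is meant to replace (removing the off-diagonal $b_1^+$) is already free in this special case by Remark~\ref{C2}.
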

The equation satisfying the hypotheses of Theorem \ref{vjj} is the one considered in \cite{IN2023,FGI, colin}.\\
A very efficient way to prove the well posedness for a quasilinear equation, is to paralinearize it and consider a quasilinear iterative scheme \`a la Kato \cite{Kato} to build up the solutions. The key step, in order to pass to the limit in the iterative scheme,
 is to obtain some \emph{a priori} estimates on the solutions of the paralinearized system. This method has been refined recently, see for instance \cite{FIHP, FIJMPA, BMM1, MMT3}.\\
In \cite{FIJMPA} the authors introduce a non sharp paradifferential calculus on $\T^d$, which has been refined by Berti, Maspero and Murgante in \cite{BMM1}.
Moreover in \cite{FIJMPA}, to obtain \emph{a priori} estimates, the authors perform some changes of coordinates of $H^s$ in order to diagonalize the paralinearized  system at the positive orders. In \cite{BMM1}, instead of changing unknown,  the authors, inspired also by Alazard, Burq and Zuily \cite{ABZ}, introduce a modified norm in $H^s$ which is tailored to the problem and which allows them to obtain the wanted estimates. This is less expensive in terms of amount of regularity, for this reason  we follow closely this approach. The main difference with respect to \cite{BMM1} is that the the matrix of symbols which diagonalizes the principal order, see \eqref{transC}, depends on $\xi$, $u$ and  $\nabla u$, while in \cite{BMM1} (and in the case that $F$ satisfies the hypotheses of Theorem \ref{vjj}) it depends only on  $u$. The $\xi$-dependence  imposes a lower order correction in the diagonalization process, see \eqref{simboliAA11}, in order to have a well defined parametrix with a remainder gaining two derivatives. 
In \cite{BMM1},  to show the existence for the paralinearized problem, the authors use a finite rank projection with a cut-off which is tailored to the problem. It is not clear to the author of the present paper if this approach works for lower order symbolic calculus, which is needed here for the aforementioned reasons  for the general equation \eqref{NLS} (it clearly works in the case that the nonlinearity satisfies the hypothesis of Theorem \ref{vjj}). We use, instead, the artificial viscosity approximation to build the solutions. A Garding type inequality is therefore needed, see Lemma \ref{garding}, to close the energy estimates.

The \eqref{NLS} is slightly more general  than the equation considered in \cite{FIJMPA}, where the presence of the linear term $\Delta u$ was required. The presence of this linear term was somehow used in the proof of the existence of the solutions of the paralinearized problem.

We conclude this introduction quoting the recent paper by Jeong and Oh \cite{JO2023} about the ill posedness in $\R^d$ of the problem in the case that \eqref{elli} is not satisfied.
We also mention the paper by Christ \cite{Christ}, in which the author gives some examples of non Hamiltonian Schr\"odinger type equations which are ill posed on the circle.\\

\section{Paradifferential calculus}\label{para}
We fix some notation concerning Sobolev spaces.
We denote by $H^{s}(\mathbb{T}^d;\mathbb{C})$
(resp. $H^{s}(\mathbb{T}^d;\mathbb{C}^{2})$)
the usual Sobolev space of functions $\mathbb{T}^{d}\ni x \mapsto u(x)\in \mathbb{C}$
(resp. $\C^{2}$).
We expand a function $ u(x) $, $x\in \mathbb{T}^{d}$, 
 in Fourier series as 
\begin{equation}\label{complex-uU}
u(x) = {(2\pi)^{-{d}/{2}}}
\sum_{n \in \mathbb{Z}^{d} } \hat{u}(n)e^{\ii n\cdot x } \, , \qquad 
\hat{u}(n) := {(2\pi)^{-{d}/{2}}} \int_{\mathbb{T}^{d}} u(x) e^{-\ii n \cdot x } \, dx \, .
\end{equation}
We also use the notation
$u_n := \hat{u}(n)$ and $
  \ov{u_n}  := \ov{\hat{u}(n)} \,. $ 
We set $\langle j \rangle:=\sqrt{1+|j|^{2}}$ for $j\in \mathbb{Z}^{d}$.
We endow $H^{s}(\mathbb{T}^{d};\mathbb{C})$ with the norm 
\begin{equation}\label{Sobnorm}
\|u(\cdot)\|_{s}^{2}:=\sum_{j\in \mathbb{Z}^{d}}\langle j\rangle^{2s}|u_{j}|^{2}\,.
\end{equation}
For $U=(u_1,u_2)\in H^{s}(\mathbb{T}^d;\mathbb{C}^{2})$
we just set
$\|U\|_{{s}}=\|u_1\|_{{s}}+\|u_2\|_{{s}}$.
We shall also write the norm in \eqref{Sobnorm} as
\begin{equation*}
\|u\|^{2}_{{s}}=(\langle D\rangle^{s}u,\langle D\rangle^{s} u)_{L^{2}}\,, 
\qquad
\langle D\rangle e^{\ii j\cdot x}=\langle j\rangle  e^{\ii j\cdot x}\,,\;\;\; 
 \mbox{for all}\,\, \, j\in \mathbb{Z}^{d}\,,
\end{equation*}
where $(\cdot,\cdot)_{L^{2}}$ denotes the standard complex $L^{2}$-scalar product
 \begin{equation}\label{scalarproScalare}
 (u,v)_{L^{2}}:=\int_{\mathbb{T}^{d}}u\bar{v}dx\,, 
 \qquad  \mbox{for all}\,\,\, u,v\in L^{2}(\mathbb{T}^{d};\mathbb{C})\,.
 \end{equation}
  \noindent
  We introduce also the product  spaces 
\begin{equation}\label{RealSobolev}
\mathcal{H}^s=\{ U=(u^+, u^-)\in 
H^s\times H^s : u^+=\overline{u^-} \}\,.
\end{equation}
With abuse of notation we shall denote by $\|\cdot\|_{{s}}$ the 
natural norm of the product space $\mathcal{H}^{s}$.
On the space $ \mathcal{H}^s$
we naturally extend the scalar product \eqref{scalarproScalare} as
\begin{equation}\label{scalarprod2x2}
(Z,W)_{L^{2}\times L^{2}}:={\rm Re}(z,w)_{L^{2}}=\tfrac{1}{4\pi}\int_{\T} z\cdot \bar{w}+\bar{z}\cdot w dx\,,
\qquad Z=\vect{z}{\bar{z}}\,,\; W=\vect{w}{\bar{w}}\in \mathcal{H}^{0}\,.
\end{equation}

  We recall the following interpolation estimate
\begin{equation}\label{interpolo}
\|u\|_{{s}}\leq \|u\|^{\theta}_{{s_1}}\|u\|^{1-\theta}_{{s_2}}\,,
\qquad \theta\in[0,1]\,,\;\;0\leq s_1\leq s_2\,,\quad s=\theta s_1+(1-\theta)s_2\,.
\end{equation}
{\bf Notation}. 
We shall 
use the notation $A\lesssim B$ to denote 
$A\le C B$ where $C$ is a positive constant
depending on  parameters fixed once for all, for instance $d$
 and $s$.
 We will emphasize by writing $\lesssim_{q}$
 when the constant $C$ depends on some other parameter $q$. When we have both $A\lesssim B$ and $B\lesssim A$ we shall write $A\sim B$.

We now recall some results concerning the paradifferential calculus. 
We  follow \cite{BMM1}.

\begin{definition}\label{def:simbolini}
Given $m,s\in\mathbb{R}$ we denote by $\Gamma^m_s$ 
the space of functions $a(x,\xi)$ defined on $\T^d\times \R^d$ 
with values in $\C$, which are $C^{\infty}$ 
with respect to the variable $\xi\in\R^d$ and such that for any $\beta\in \N\cup\{0\}$,
there exists a constant $C_{\beta}>0$ such that 
\begin{equation}\label{stima-simbolo}
\|\partial_{\xi}^{\beta} a(\cdot,\xi)\|_{s}
\leq C_{\beta}\langle\xi\rangle^{m-|\beta|}\,, 
\quad  \mbox{for all}\,\, \xi\in\R^d.
\end{equation}
The elements of $\Gamma_s^m$ are called \emph{symbols} of order $m$.
\end{definition}
We endow the space $\Gamma^m_{s}$ with the family of {semi}-norms 
\begin{equation}\label{seminorme}
|a|_{m,s,n}:=\max_{\beta\leq n}\, 
\sup_{\xi\in\R^d}\|\langle\xi\rangle^{\beta-m}\partial^\beta_\xi a(\cdot,\xi)\|_{s}\,.
\end{equation}
Consider a function $\chi\in C^{\infty}(\R^d,[0,1])$ such that 
\[
\chi(\xi)=\begin{cases}
1 \qquad \mathrm{if}\,\,|\xi|\leq 1.1,\\
0 \qquad \mathrm{if}\,\,|\xi|\geq 1.9.
\end{cases}
\]
Let $\epsilon\in(0,1)$ and define 
$\chi_{\epsilon}(\xi):=\chi(\xi/\epsilon).$ 
Given a symbol $a(x,\x)$ in $\Gamma^m_s$
we define its Bony-Weyl
quantization 
\begin{equation}\label{quantiWeyl}
T_{a}h:=\opbw(a(x,\xi))h:=\sum_{j\in \mathbb{Z}^d}e^{\ii j x}
\sum_{k\in\mathbb{Z}^d}
\chi_{\epsilon}\left(\tfrac{|j-k|}{\langle j+k\rangle}\right)
\widehat{a}\left(j-k,\tfrac{j+k}{2}\right)\widehat{h}(k).
\end{equation}
We list a series of technical lemmas that we shall systematically use throughout the paper.
The following is Theorem $2.4$ in \cite{BMM1}  and 
concerns the action of paradifferential operators on Sobolev spaces.
\begin{theorem}{\bf (Action on Sobolev spaces).}\label{azione}
Let $s_0>d/2$, $m\in\R$ and $a\in\Gamma^m_{s_0}$. Then $\opbw(a)$ 
extends as a bounded operator from ${H}^{s}(\T^d;\C)$ to ${H}^{s-m}(\T^d;\C)$, 
for all $s\in\R$, with the following estimate  
\begin{equation}\label{ac1}
\|\opbw(a)u\|_{{s-m}}\lesssim |a|_{m,s_0,2(d+1)}\|u\|_{s} \qquad  \mbox{for all}\,\, \,u\in{H}^s\,.
\end{equation}
Moreover for any $\rho\geq 0$ we have 
\begin{equation}\label{ac2}
\|\opbw(a)u\|_{{s-m-\rho}}\lesssim |a|_{m,s_0-\rho,2(d+1)}\|u\|_{s} \qquad \mbox{for all}\,\,\, {u}\in {H}^s\,.
\end{equation}
\end{theorem}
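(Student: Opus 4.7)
The plan is to reduce the $H^s\to H^{s-m}$ continuity to a Schur-test estimate on the Fourier matrix of $\opbw(a)$, following the standard paradifferential strategy of~\cite{BMM1}. Reading off~\eqref{quantiWeyl}, the matrix elements of $\opbw(a)$ in the basis $\{e^{\ii j\cdot x}\}_{j\in\Z^d}$ are
\[
M_{jk} := \chi_\eps\!\left(\tfrac{|j-k|}{\langle j+k\rangle}\right)\widehat a\!\left(j-k,\tfrac{j+k}{2}\right), \qquad j,k\in\Z^d.
\]
Thanks to the support of $\chi_\eps$, $M_{jk}$ vanishes unless $|j-k|\le 1.9\,\langle j+k\rangle$, and in this region the triangle inequality yields $\langle j\rangle\sim\langle k\rangle\sim\langle (j+k)/2\rangle$, with constants depending only on $\eps$. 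The claim~\eqref{ac1} is therefore equivalent to the $\ell^2(\Z^d)$-boundedness of the weighted matrix $\widetilde M_{jk}:=\langle j\rangle^{s-m}M_{jk}\langle k\rangle^{-s}$, with operator norm bounded by $|a|_{m,s_0,2(d+1)}$.

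The key step is to estimate $|\widehat a(n,\xi)|$ in a way that survives the borderline assumption $s_0>d/2$. The naive Sobolev bound $|\widehat a(n,\xi)|\le\langle n\rangle^{-s_0}\|a(\cdot,\xi)\|_{s_0}$ alone is insufficient, since $\sum_k\langle j-k\rangle^{-s_0}$ does not converge uniformly in $j$ once $s_0\le d$. The fix exploits the $\xi$-smoothness encoded in the seminorm $|a|_{m,s_0,\cdot}$: Taylor-expanding $a(x,\xi)$ in $\xi$ around a convenient base point up to order $2(d+1)$ and using the Plancherel identity
\[
\sum_{n\in\Z^d}\langle n\rangle^{2s_0}\bigl|\widehat{\partial_\xi^\beta a}(n,\xi)\bigr|^2 \;\lesssim\; |a|_{m,s_0,|\beta|}^2\,\langle\xi\rangle^{2(m-|\beta|)},
\]
combined with a weighted Cauchy--Schwarz in the variable $n=j-k$, produces row and column majorants whose $\ell^1$-sums are finite and uniformly controlled by $|a|_{m,s_0,2(d+1)}$. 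Schur's test then yields $\|\widetilde M\|_{\ell^2\to\ell^2}\lesssim|a|_{m,s_0,2(d+1)}$, which is~\eqref{ac1}.

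For~\eqref{ac2} I would rerun the same argument with $s_0$ replaced by $s_0-\rho$ in the Fourier-decay step. The decay in $n$ then reads $\langle n\rangle^{-(s_0-\rho)}=\langle n\rangle^{\rho}\langle n\rangle^{-s_0}$; using $\langle n\rangle\lesssim\langle j\rangle$ on the support of $\chi_\eps$, the extra factor $\langle n\rangle^{\rho}\lesssim\langle j\rangle^{\rho}$ costs exactly $\rho$ derivatives on the output side, upgrading the bound to a mapping $H^s\to H^{s-m-\rho}$ with constant controlled by $|a|_{m,s_0-\rho,2(d+1)}$.

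The main obstacle is precisely the borderline Sobolev threshold $s_0>d/2$: one cannot afford a pointwise bound on $\widehat a(n,\xi)$ and must instead exploit the full Plancherel structure in $x$ together with many $\xi$-derivatives. The explicit count $2(d+1)$ is an artefact of needing absolutely summable majorants after Cauchy--Schwarz in the Schur step, which requires a margin of at least $d/2$ extra decay beyond the $s_0$-regularity in both row and column directions.
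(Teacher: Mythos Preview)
The paper does not prove this statement at all: it is quoted verbatim as ``Theorem~2.4 in~\cite{BMM1}'' and used as a black box, so there is no in-paper argument to compare your sketch against.

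That said, your outline has a real gap at the step you yourself flag as ``the main obstacle''. You propose to salvage the Schur test (which needs $\ell^1$ row/column majorants) by Taylor-expanding $\widehat a(n,\xi)$ in $\xi$ around a base point independent of $n$ and then invoking Plancherel in $x$. For the finitely many Taylor monomials this works, because the frequency argument is frozen and $\sum_n\langle n\rangle^{2s_0}|\widehat{\partial_\xi^\beta a}(n,\xi_0)|^2=\|\partial_\xi^\beta a(\cdot,\xi_0)\|_{s_0}^2$ is controlled. But the Taylor \emph{remainder} of any finite order still carries the $n$-dependent argument (say $j-tn/2$), so Plancherel cannot be applied to it; the only available bound is the pointwise one $|\widehat{\partial_\xi^\beta a}(n,\xi)|\le\langle n\rangle^{-s_0}|a|_{m,s_0,|\beta|}\langle\xi\rangle^{m-|\beta|}$. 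Plugging this in and summing over $|n|\lesssim\eps\langle j\rangle$ leaves, after all cancellations, a factor $\langle j\rangle^{d-s_0}$ that is \emph{independent of the expansion order} and blows up exactly in the range $d/2<s_0<d$ you are trying to cover. So the Schur-plus-Taylor route does not close.

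The argument actually used in~\cite{BMM1} (following M\'etivier~\cite{Metivier2008}) is organised differently: one performs a dyadic Littlewood--Paley decomposition in $\xi$ and exploits the almost-orthogonality enforced by the cutoff $\chi_\eps$; on each dyadic block one proves an $L^2\to L^2$ bound by a kernel estimate, where the $2(d+1)$ derivatives in $\xi$ are spent on integration by parts to get physical-space decay of the kernel, and the hypothesis $s_0>d/2$ enters only through the Sobolev embedding $H^{s_0}\hookrightarrow L^\infty$ applied to the (frequency-truncated) symbol. No Taylor expansion and no Plancherel-in-$x$ are needed. Your heuristic for~\eqref{ac2}, trading $\langle n\rangle^\rho\lesssim\langle j\rangle^\rho$ on the support of $\chi_\eps$ for $\rho$ extra derivatives on the output, is correct once~\eqref{ac1} is in hand.
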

%

\begin{definition}
Let $\rho\in (0, 2]$. Given two symbols $a\in \Gamma^m_{s_0+\rho}$ and $b\in \Gamma^{ m'}_{s_0+\rho}$, we define
\begin{equation}\label{cancelletto}
a\#_{\rho} b= \begin{cases}
ab \quad \qquad \quad \quad\,\, \rho\in(0,1]\\
ab+\frac{1}{2\ii}\{a,b\}\quad \rho\in (1,2],\\
\end{cases}
\end{equation}
where we denoted by $\{a,b\}:=\nabla_{\xi}a\cdot\nabla_xb-\nabla_xa\cdot\nabla_{\xi}b$ the Poisson's bracket between symbols.
\end{definition}

The following is Theorem $2.5$ in \cite{BMM1}. This  result concerns the symbolic calculus for the composition of Bony-Weyl paradifferential operators. 
\begin{theorem}{\bf (Composition).}\label{compo}
Let $s_0>d/2$, $m,m'\in\mathbb{R}$, $\rho\in(0,2]$ and 
$a\in\Gamma^m_{s_0+\rho}$, $b\in\Gamma^{m'}_{s_0+\rho}$. We have 
\[
\opbw(a)\circ\opbw(b)=\opbw(a\#_{\rho}b)+R^{c}_{\rho}(a,b)\,,
\]
 where the linear operator $R^{c}_{\rho}$ maps ${H}^s(\T)$ into ${H}^{s+\rho-m-m'}$, 
 for any $s\in\R$.
 
 Moreover it satisfies the following estimate, for all $u\in {H}^s$,
\begin{equation}\label{resto}
\|R^{c}_{\rho}(a,b)u\|_{{s-(m+m')+\rho}}
\lesssim_s 
(|a|_{m,s_0+\rho,N}|b|_{m',s_0,N}+|a|_{m,s_0,N}|b|_{m',s_0+\rho,N})\|u\|_{s},
\end{equation}
where $N\geq 3d+4$.\end{theorem}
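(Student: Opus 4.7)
The plan is to compute the composition $\opbw(a)\opbw(b)h$ directly from the Fourier-side definition \eqref{quantiWeyl}, match the leading behavior with $\opbw(a\#_\rho b)h$ from \eqref{cancelletto}, and control the remainder by means of Theorem \ref{azione}. Applying \eqref{quantiWeyl} twice, for $h\in H^s$ one writes
\[
\opbw(a)\opbw(b)h=\sum_{j\in\Z^d}e^{\ii j\cdot x}\sum_{k\in\Z^d}K(j,k)\,\widehat{h}(k),
\]
with
\[
K(j,k)=\sum_{\ell\in\Z^d}\chi_\epsilon\!\left(\tfrac{|j-\ell|}{\langle j+\ell\rangle}\right)\chi_\epsilon\!\left(\tfrac{|\ell-k|}{\langle \ell+k\rangle}\right)\widehat{a}\!\left(j-\ell,\tfrac{j+\ell}{2}\right)\widehat{b}\!\left(\ell-k,\tfrac{\ell+k}{2}\right).
\]
First I would set $n=j-\ell$, $n'=\ell-k$ and $\eta=(j+k)/2$, so that $(j+\ell)/2=\eta+n'/2$ and $(\ell+k)/2=\eta-n/2$, turning $K(j,k)$ into a convolution-type sum in $(n,n')$ with the two symbol factors evaluated at shifted frequencies.

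Next I would Taylor-expand the two symbol factors $\widehat{a}(n,\eta+n'/2)$ and $\widehat{b}(n',\eta-n/2)$ in the frequency variable around $\eta$. Convolving the order-zero terms in $(n,n')$ reproduces $\widehat{ab}(n+n',\eta)$, i.e.\ the leading part of $\opbw(a\#_\rho b)$; for $\rho\in(1,2]$ the order-one cross terms combine into $\tfrac{1}{2\ii}\widehat{\{a,b\}}(n+n',\eta)$, matching the Poisson bracket correction in \eqref{cancelletto}. The error then splits into (i) an integral Taylor remainder of order $\lceil\rho\rceil$, in which each factor $n/2$ or $n'/2$ is compensated by an extra $x$-derivative on $a$ or $b$ and by a pointwise gain $\langle\eta\rangle^{-\rho}$ after $\rho$ frequency differentiations; and (ii) the discrepancy between the product of cutoffs above and the single cutoff $\chi_\epsilon\!\left(\tfrac{|n+n'|}{\langle 2\eta\rangle}\right)$ appearing in $\opbw(a\#_\rho b)$.

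For (ii) I would introduce a dyadic partition separating the regime $|n|+|n'|\ll\langle\eta\rangle$, on which the product of cutoffs essentially coincides with the target cutoff up to a smoothing error, from the regime $|n|+|n'|\gtrsim\langle\eta\rangle$, on which any power of $\langle\eta\rangle$ can be converted into extra $x$-derivatives of one of the symbols and then absorbed into $|a|_{m,s_0+\rho,N}$ or $|b|_{m',s_0+\rho,N}$. The asymmetric structure of \eqref{resto} then appears naturally: depending on whether the Taylor remainder places the extra $\rho$ derivatives on $a$ or on $b$, one bounds by $|a|_{m,s_0+\rho,N}|b|_{m',s_0,N}$ or by $|a|_{m,s_0,N}|b|_{m',s_0+\rho,N}$. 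Finally, applying Theorem \ref{azione} to the resulting remainder symbol, the $\rho$ frequency derivatives cancel against the pointwise gain $\langle\eta\rangle^{-\rho}$ and \eqref{resto} follows.

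The main obstacle I foresee is the bookkeeping in (ii): the two cutoffs inside $K(j,k)$ depend on genuinely different frequency arguments $(j+\ell)/2$ and $(\ell+k)/2$, so comparing them with a single cutoff centered at $\eta=(j+k)/2$ is not immediate. One has to show that their difference is supported where either $|n|$ or $|n'|$ is of size $\langle\eta\rangle$, a regime in which the polynomial decay of $\widehat{a}(n,\cdot)$ and $\widehat{b}(n',\cdot)$ in the first variable, quantified by the extra $\rho$ Sobolev regularity of the symbols, furnishes the needed gain. Keeping the cost of this comparison within $N\geq 3d+4$ seminorms, so that the composition really gains $\rho$ derivatives, is the delicate technical step, and is precisely the place where the hypothesis $a\in\Gamma^m_{s_0+\rho}$, $b\in\Gamma^{m'}_{s_0+\rho}$ rather than merely $\Gamma^\cdot_{s_0}$ is exploited.
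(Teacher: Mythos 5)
The paper does not supply a proof of this statement: it is quoted verbatim as Theorem 2.5 of Berti--Maspero--Murgante \cite{BMM1}, and the present paper cites it without argument. There is therefore no in-paper proof to compare against; I can only assess your plan on its own merits and against the method one would expect in \cite{BMM1}.

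Your outline is the standard Fourier-side Weyl-calculus argument: compose the two Bony--Weyl quantizations, pass to the symmetric variables $n=j-\ell$, $n'=\ell-k$, $\eta=(j+k)/2$, Taylor-expand the two symbol Fourier coefficients about $\eta$, recognize $ab$ and $\tfrac{1}{2\ii}\{a,b\}$ in the zeroth- and first-order terms, and bound the Taylor remainder together with the discrepancy between the product of the two cutoffs and the single cutoff of $\opbw(a\#_\rho b)$. The change of variables is correct and the identification of the leading symbols is the right structural step; this is very plausibly close to how \cite{BMM1} proceeds.

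Two places, however, are under-detailed to the point of being gaps. First, a Taylor remainder of integer order $\lceil\rho\rceil$ costs $\lceil\rho\rceil$ extra $x$-derivatives on a symbol, but the hypothesis only supplies $\rho$ of them; likewise one cannot literally take ``$\rho$ frequency differentiations'' for non-integer $\rho$. The fractional balance has to be manufactured explicitly, for instance by using that on the cutoff support $|n|,|n'|\lesssim\langle\eta\rangle$ so that a factor $|n'|$ can be split as $|n'|^{\rho}\langle\eta\rangle^{1-\rho}$, trading $\rho$ units of $x$-regularity for a $\langle\eta\rangle^{-\rho}$ gain from the $\xi$-differentiated coefficient; without this interpolation (or an equivalent Littlewood--Paley summation) the bookkeeping does not close at regularity $s_0+\rho$. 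Second, the remainder $R^c_\rho(a,b)$ is not a priori of the form $\opbw(c)$ for a single symbol, so ``applying Theorem \ref{azione} to the resulting remainder symbol'' is not available; one must estimate the kernel $K(j,k)-K_{a\#_\rho b}(j,k)$ directly, typically via a Schur test or dyadic summation, and this is also where the number of $\xi$-seminorms $N\ge 3d+4$ actually gets counted. You acknowledge that the cutoff comparison is ``the delicate technical step'' but do not resolve it, so as written the proposal is a correct roadmap rather than a complete proof.
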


The next result is Lemma $2.7$ in \cite{BMM1}.
\begin{lemma}{\bf (Paraproduct).} \label{ParaMax} 
Fix $s_0>d/2$ and let $f\in H^s$ and $g\in H^r$ with $s + r \ge 0$. Then
\begin{equation}
\label{eq: paraproductMax}
fg=\opbw(f)g+\opbw({g})f+{R}^p(f,g)\,,
\end{equation}
where the bilinear operator $R: H^s \times H^r\to H^{s + r - s_0}$ satisfies the estimate
\begin{equation}
\label{eq: paraproductMax2}
\|{R}^p(f,g)\|_{{s+r - s_0}} \lesssim_s \|f\|_{s} \|g\|_{r}\,.
\end{equation}
\end{lemma}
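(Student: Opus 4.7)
The plan is the classical Bony paraproduct decomposition argument, carried out directly in Fourier. I express $fg$, $\opbw(f)g$, and $\opbw(g)f$ as sums over frequencies $(j,k)$, show that the remainder kernel is supported in the high--high interaction region, and then close the Sobolev bound by a soft estimate in which the condition $s_0>d/2$ enters only through the convergence of a single Fourier sum.

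\emph{Step 1: Fourier representation.} Since both $f$ and $g$ are independent of $\xi$, Definition \eqref{quantiWeyl} applied to $a=f$ and to $a=g$ (followed, in the second case, by the change of summation index $k\mapsto j-k$) yields, up to the inessential constants coming from the normalization in \eqref{complex-uU},
\begin{align*}
\widehat{\opbw(f)g}(j) &= \sum_{k\in\Z^d}\chi_\epsilon\!\left(\tfrac{|j-k|}{\langle j+k\rangle}\right)\widehat f(j-k)\,\widehat g(k),\\
\widehat{\opbw(g)f}(j) &= \sum_{k\in\Z^d}\chi_\epsilon\!\left(\tfrac{|k|}{\langle 2j-k\rangle}\right)\widehat f(j-k)\,\widehat g(k).
\end{align*}
Combined with the Fourier expansion $\widehat{fg}(j)\sim\sum_k\widehat f(j-k)\widehat g(k)$, this gives
\[
\widehat{R^p(f,g)}(j)\;=\;\sum_{k\in\Z^d}K(j,k)\,\widehat f(j-k)\,\widehat g(k),\qquad K(j,k):=1-\chi_\epsilon\!\left(\tfrac{|j-k|}{\langle j+k\rangle}\right)-\chi_\epsilon\!\left(\tfrac{|k|}{\langle 2j-k\rangle}\right).
\]

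\emph{Step 2: High--high support of the kernel.} On $\mathrm{supp}\,K$ both cut-offs vanish, so $|j-k|\gtrsim|j+k|$ and $|k|\gtrsim|2j-k|$. Applying the triangle inequality to the identities $2j=(j+k)+(j-k)$ and $2j=(2j-k)+k$ yields $|j|\lesssim|j-k|$ and $|j|\lesssim|k|$. Then $|j-k|\le|j|+|k|\lesssim|k|$ and symmetrically $|k|\lesssim|j-k|$, so that
\[
|k|\;\sim\;|j-k|\;\gtrsim\;|j|\qquad\text{on }\mathrm{supp}\,K.
\]

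\emph{Step 3: Sobolev estimate.} Define $F_n:=\langle n\rangle^s|\widehat f(n)|$ and $G_n:=\langle n\rangle^r|\widehat g(n)|$, so that $\|F\|_{\ell^2}=\|f\|_s$ and $\|G\|_{\ell^2}=\|g\|_r$. Since $\langle j-k\rangle\sim\langle k\rangle\gtrsim\langle j\rangle$ on $\mathrm{supp}\,K$ and $s+r\ge 0$,
\[
\frac{\langle j\rangle^{s+r-s_0}}{\langle j-k\rangle^s\langle k\rangle^r}\;\sim\;\langle j\rangle^{-s_0}\!\left(\frac{\langle j\rangle}{\langle k\rangle}\right)^{s+r}\!\!\lesssim\;\langle j\rangle^{-s_0}.
\]
Consequently
\[
\langle j\rangle^{s+r-s_0}\bigl|\widehat{R^p(f,g)}(j)\bigr|\;\lesssim\;\langle j\rangle^{-s_0}\sum_{k\in\Z^d}F_{j-k}\,G_k\;=\;\langle j\rangle^{-s_0}(F\ast G)_j,
\]
and since $\|F\ast G\|_{\ell^\infty}\le\|F\|_{\ell^2}\|G\|_{\ell^2}$ by Cauchy--Schwarz, squaring and summing in $j$ yields
\[
\|R^p(f,g)\|_{s+r-s_0}^2\;\lesssim\;\|f\|_s^2\|g\|_r^2\sum_{j\in\Z^d}\langle j\rangle^{-2s_0}\;\lesssim\;\|f\|_s^2\|g\|_r^2,
\]
the last sum being finite exactly because $s_0>d/2$.

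\emph{Main obstacle.} The only delicate point is Step 2. The two cut-off functions have geometrically different arguments (the first involves $j+k$, the second $2j-k$, due to the symmetric reindexing of $\opbw(g)f$), and one has to verify that together they cover the entire low--high and high--low branches of the Bony decomposition, so that $K$ is genuinely supported only in the high--high regime. Once that support property is secured, the remaining bilinear bound is a routine Schur-type estimate and $s_0>d/2$ intervenes only through the summability of $\langle j\rangle^{-2s_0}$ on $\Z^d$.
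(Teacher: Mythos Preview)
The paper does not actually prove this lemma; it is merely quoted from \cite{BMM1} (Lemma~2.7 there). Your direct Bony decomposition in Fourier is the standard route and is correct in substance.

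There is one imprecision worth flagging in Step~2. It is not literally true that on $\mathrm{supp}\,K$ both cut-offs \emph{vanish}: one can have, say, $\chi_\epsilon(|j-k|/\langle j+k\rangle)\in(0,1)$ while the other cut-off is zero, and still $K\neq 0$. What is true (for $\epsilon$ small enough, and outside a finite set of pairs $(j,k)$ with $|j-k|,|k|=O(1)$, which contributes a trivially bounded term) is that the two cut-offs have disjoint supports; hence on $\mathrm{supp}\,K$ one has $\chi_1<1$ \emph{and} $\chi_2<1$, i.e.\ $|j-k|>1.1\epsilon\langle j+k\rangle$ and $|k|>1.1\epsilon\langle 2j-k\rangle$. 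This weaker information is precisely what your triangle-inequality manipulations use, so the conclusion $|k|\sim|j-k|\gtrsim|j|$ (with implicit constants depending on the fixed parameter~$\epsilon$) is correct, and Step~3 then closes exactly as you wrote.
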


\section{Paralinearization} We rewrite equation \eqref{NLS} as a system of paradifferential equations as done in \cite{FIHP,FIJMPA}, but we use the paradifferential framework introduced above. 
Let $u\in H^{{s}}$, with ${s}>d/2+1$, we introduce the symbols
\begin{equation}\label{simboa2}
\begin{aligned}
&a_2(x,\x):=a_{2}(U;x,\x)
:=\sum_{j,k=1}^{d}(\pa_{\ov{u}_{x_k} u_{x_{j}}}F) \x_{j}\x_{k}\,\in \Gamma_{s}^2,
\\&
b_2(x,\x):=b_{2}(U;x,\x)
:=\sum_{j,k=1}^{d}(\pa_{\ov{u}_{x_k}\, \ov{u}_{x_{j}}}F) \x_{j}\x_{k}\,\in\Gamma_{s}^2,\\
&a_1(x,\x):=a_{1}(U;x,\x):=\frac{\ii}{2}\sum_{j=1}^{d}\Big(
(\pa_{\ov{u} u_{x_{j}}}F)-(\pa_{{u} \ov{u}_{x_{j}}}F)
\Big)\x_{j}\,\in \Gamma_{s}^1,
\end{aligned}
\end{equation}
where $F=F(u,\nabla u)$ in \eqref{HamNLS}.
Furthermore, since the symbols may contain one derivative of the unknown $u$, one proves the estimates on the seminorms (recall \eqref{seminorme}) 
\begin{equation}\label{semi-1}
|a_2(x,\x)|_{2,p,\alpha}+ |b_2(x,\x)|_{2,p,\alpha}+ |\vec{a}_1(x)\cdot\xi|_{2,p,\alpha}\lesssim_s \mathtt{C}(\|u\|_{p+1}), \quad \mbox{for all}\,\,\, \tfrac{d}{2}<p.
\end{equation}

 We have the following.
\begin{proposition}\label{NLSparapara}
Fix $s_0>d/2$. The equation \eqref{NLS}
is equivalent to the following system:
\begin{equation}\label{QNLS444}
\dot{U}=\ii E\opbw\big(A_{2}(x,\x)+A_{1}(x,\x)\big)U+
R(U)(U)\,,
\end{equation}
where 
\begin{equation}\label{matrici}
U:=\vect{u}{\bar{u}}\,,\quad  E:=\sm{1}{0}{0}{-1}\,,
\end{equation}
the matrices $A_2(x,\x)=A_2(U;x,\x)$,
$A_1(x,\x)=A_1(U;x,\x)$  have the form
\begin{equation}\label{matriceA2}
A_2(x,\x):=\left(\begin{matrix}a_2(x,\x) & b_{2}(x,\x) \vspace{0.2em}\\
\ov{b_{2}(x,-\x)} & {a_{2}(x,\x)} \end{matrix}\right)\,,
\qquad
A_1(x,\x):=\left(\begin{matrix}a_1(x,\x) & 0 \vspace{0.2em}\\
0 & \ov{a_{1}(x,-\x)} \end{matrix}\right)
\end{equation}
and  $a_2,a_1,b_2$ are  the symbol in \eqref{simboa2}.
For any $s\geq s_0+3$ and any $ U,V\in \cH^{s}(\mathbb{T}^{d};\mathbb{C}^{2}) $, 
the remainder $R$ satisfies the estimates 
\begin{align}
&\|R(U)U\|_{s}\lesssim \mathtt{C}(\|U\|_{s_0+3})\|U\|_{s}\,,\label{stimaRRR}\\
&\|R(U)U-R(V)V\|_{s}\lesssim
 \mathtt{C}(\|U\|_{s_0+3},\|V\|_{s_0+3}) \|U-V\|_{s}+\mathtt{C}(\|U\|_{s},\|V\|_{s})\|U-V\|_{s_0+3}\,,\label{nave101}\\
& \|R(U)U-R(V)V\|_{s_0+1}\lesssim
 \mathtt{C}(\|U\|_{s_0+3},\|V\|_{s_0+3}) \|U-V\|_{s_0+1}\label{magnaccio}
 \end{align}
where $\mathtt{C}(\cdot,\cdot):={C}(\max\{\cdot,\cdot\})>0$, for some non decreasing function $C(\cdot)$. Concerning the unbounded part of the equation we have for any $\s\geq 0$ and $j=1,2$
\begin{align}
\|\opbw(A_j(U;x,\xi))W\|_{\s}&\leq {C}(\|U\|_{s_0+1})\|W\|_{\s},\label{mamma1}\\
\|\opbw(A_j(U;x,\xi)-A_{j}(V;x,\xi))W\|_{\s}&\leq {C}(\|U-V\|_{s_0+1})\|W\|_{\s},\label{mamma2}
\end{align}
for any $U,V, W$ in $\cH^{\s}$.
\end{proposition}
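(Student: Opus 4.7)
Starting from \eqref{NLS} I would expand $\sum_j\partial_{x_j}\bigl((\partial_{\bar u_{x_j}}F)(u,\nabla u)\bigr)$ via the chain rule into four groups: the two second-order pieces $\sum_{j,k}(\partial_{\bar u_{x_j}u_{x_k}}F)\,u_{x_jx_k}$ and $\sum_{j,k}(\partial_{\bar u_{x_j}\bar u_{x_k}}F)\,\bar u_{x_jx_k}$, together with the first-order pieces $\sum_j(\partial_{\bar u_{x_j}u}F)u_{x_j}$ and $\sum_j(\partial_{\bar u_{x_j}\bar u}F)\bar u_{x_j}$. Combined with $-(\partial_{\bar u}F)(u,\nabla u)$ coming from the first term of \eqref{NLS}, this gives the full right-hand side of the scalar equation. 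I would then apply Lemma~\ref{ParaMax} to each product \emph{coefficient}\,$\cdot$\,\emph{derivative of $u$ or $\bar u$}: the $T_{\mathrm{coef}}(\partial^\alpha u)$ contributions, once rewritten in Bony--Weyl form, collect into $\opbw(A_2(x,\xi)+A_1(x,\xi))$; the symmetric $T_{\partial^\alpha u}(\mathrm{coef})$ pieces and the Bony remainders $R^p$, together with the zeroth-order $\partial_{\bar u}F$, are absorbed into $R(U)U$.

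\textbf{Symbol identification.} The piece $T_{\partial_{\bar u_{x_j}u_{x_k}}F}(\partial_{x_jx_k}u)$ has Bony--Weyl symbol $-\xi_j\xi_k\,\partial_{\bar u_{x_j}u_{x_k}}F$; after the outer minus sign in \eqref{NLS} and the overall factor $-i$ from $iu_t=\ldots$, summing in $j,k$ yields exactly the $+i\,\opbw(a_2)u$ of \eqref{QNLS444}, with $a_2$ as in \eqref{simboa2}. The analogous computation on the $\bar u_{x_jx_k}$-factors produces the $b_2$ entry of $A_2$. The conjugate equation for $\bar u$ fills in the second row of the matrices with the expected $\overline{b_2(x,-\xi)}$ and $\overline{a_1(x,-\xi)}$ entries, using reality of $F$ and the parity in $\xi$ of Weyl symbols. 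The subprincipal symbol $a_1$ is obtained by combining the first-order contributions from the chain-rule derivatives of the coefficient of $\partial_{\bar u_{x_j}}F$ applied to $u_{x_k}$ factors with the $(\partial_{\bar u}F)$ term; the antisymmetric combination $\tfrac{i}{2}\sum_j\bigl((\partial_{\bar u u_{x_j}}F)-(\partial_{u\bar u_{x_j}}F)\bigr)\xi_j$ in \eqref{simboa2} is exactly what the Bony--Weyl symmetrization at the midpoint frequency $(j+k)/2$ selects, and reflects the self-adjointness forced by the Hamiltonian structure \eqref{HamNLS}.

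\textbf{Estimates.} Each coefficient $(\partial_{y_i\bar y_l}F)(u,\nabla u)$ is polynomial in $(u,\nabla u)$; the tame algebra estimate in $H^p$ for $p>d/2$ delivers \eqref{semi-1}, whence Theorem~\ref{azione} yields \eqref{mamma1}--\eqref{mamma2}, the Lipschitz-in-$U$ bounds following by multilinearity and distribution of $U-V$ across factors. For $R(U)U$: the zeroth-order term $\partial_{\bar u}F$ is tame in $H^s$ thanks to the algebra property of $H^{s_0+1}$, with bound $\mathtt{C}(\|U\|_{s_0+1})\|U\|_s$; the symmetric paraproducts $T_{\partial^2 u}(\mathrm{coef})$ are handled by Theorem~\ref{azione}, $\mathrm{coef}\in H^{s_0}$ requiring $\|U\|_{s_0+1}$ and $\partial^2 u\in H^{s-2}$ compensated by the order; the Bony remainders $R^p(\mathrm{coef},\partial^2 u)$ map into $H^{(s-1)+(s-2)-s_0}=H^{2s-3-s_0}$ by \eqref{eq: paraproductMax2}, which is contained in $H^s$ precisely when $s\geq s_0+3$, matching the hypothesis. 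The bounds \eqref{nave101}--\eqref{magnaccio} then follow by expanding $R(U)U-R(V)V$ as a telescopic multilinear sum and applying the same tame estimates with $U-V$ in one slot.

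\textbf{Main obstacle.} The real technical work is the combinatorial bookkeeping: tracking all the first- and zeroth-order terms produced by the chain rule and by the subprincipal parts of each paradifferential composition, and verifying that their sum is exactly the antisymmetric Hamiltonian form of $a_1$ in \eqref{simboa2}. Once this identification is carried out, every estimate reduces to routine applications of Lemma~\ref{ParaMax}, Theorem~\ref{azione} and the tame/algebra properties of Sobolev spaces.
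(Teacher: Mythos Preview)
Your route differs from the paper's, and there is a genuine gap in the remainder estimates.

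You expand $\partial_{x_j}\big((\partial_{\bar u_{x_j}}F)(u,\nabla u)\big)$ via the chain rule \emph{first}, and then apply Lemma~\ref{ParaMax} once to each product $\mathrm{coef}\cdot\partial^\alpha u$, placing the symmetric piece $T_{\partial^\alpha u}[\mathrm{coef}]$ in the remainder. But for the second-order terms the coefficient $\mathrm{coef}=(\partial_{\bar u_{x_j}u_{x_k}}F)(u,\nabla u)$ generically depends on $\nabla u$ (whenever $F$ is at least cubic in the gradient), so $\mathrm{coef}\in H^{s-1}$ only. Theorem~\ref{azione} then gives merely
\[
\|T_{\partial^2 u}[\mathrm{coef}]\|_{s-1}\lesssim\|\partial^2 u\|_{s_0}\,\|\mathrm{coef}\|_{s-1},
\]
one derivative short of \eqref{stimaRRR}. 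Your phrase ``$\partial^2 u\in H^{s-2}$ compensated by the order'' does not correspond to any available bound: $T_{\partial^2 u}$ is an order-$0$ paraproduct irrespective of the regularity of its symbol. The same loss hits the term $(\partial_{\bar u}F)(u,\nabla u)$, which you throw directly into the remainder but which lies only in $H^{s-1}$ when it involves $\nabla u$.

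The paper circumvents this by paralinearizing \emph{before} differentiating. It applies the Bony paralinearization formula to the scalar functions $(\partial_{\bar u}F)$ and $(\partial_{\bar u_{x_j}}F)$ as polynomials in $(u,\nabla u)$ alone (Lemma~\ref{product}), keeping the outer $\partial_{x_j}$ intact; the resulting remainder is genuinely smoothing, landing in $H^{2(s-1)-s_0}$, which is contained in $H^{s+1}$ precisely when $s\ge s_0+3$ and therefore survives the outer derivative. The top-order main term is then $-\partial_{x_j}T_{\partial_{\bar u_{x_j}u_{x_k}}F}[\partial_{x_k}u]=\opbw(\ii\xi_j)\circ\opbw(\partial_{\bar u_{x_j}u_{x_k}}F)\circ\opbw(\ii\xi_k)$, and Theorem~\ref{compo} with $\rho=2$ is applied. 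The subprincipal symbol this composition produces is \emph{antisymmetric} in $j,k$ and hence vanishes after summing, by the symmetry of the real Hessian $(\partial_{\bar u_{x_j}u_{x_k}}F)_{j,k}$; the surviving $a_1$ comes entirely from the first-order block \eqref{paralin2}. Your appeal to ``midpoint-frequency symmetrization'' does not capture this cancellation: the composition $T_{\mathrm{coef}}\circ\opbw(-\xi_j\xi_k)$ that your scheme uses produces a subprincipal contribution that is \emph{symmetric} in $j,k$ and does not cancel.
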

In order to prove  Prop. \ref{NLSparapara}, we first show the following lemma.
\begin{lemma}\label{product}
Fix $s_0>d/2$ and $s\geq s_0$.
Consider $u\in H^{s}(\mathbb{T}^{d};\mathbb{C})$,
then we have that 
\begin{align}
(\pa_{\ov{u}}F)&(u,\nabla u)-\sum_{j=1}^{d}\pa_{x_j}\big(\pa_{\ov{u}_{x_{j}}}F\big)(u,\nabla u)=T_{\pa_{u\bar{u}}F}[u]+T_{\pa_{\bar{u}\,\bar{u}}F}[\bar{u}]
\label{paralin1}
\\&+\sum_{j=1}^{d}\Big(  T_{\pa_{\bar{u}u_{x_{j}}}F}[u_{x_j}]
+T_{\pa_{\bar{u}\,\ov{u}_{x_{j}}}F}[\ov{u}_{x_j}] \Big)
-\sum_{j=1}^{d}\pa_{x_j}\Big(  T_{\pa_{{u}\ov{u}_{x_{j}}}F}[u]
+T_{\pa_{\bar{u}\,\ov{u}_{x_{j}}}F}[\bar{u}] \Big)\label{paralin2}\\
&-\sum_{j=1}^{d}\pa_{x_{j}}
\sum_{k=1}^{d}\Big( 
 T_{\pa_{\ov{u}_{x_{j}} \,{u_{x_k}}}F}[u_{x_k}]
+T_{\pa_{\ov{u}_{x_{j}}\, \ov{u}_{x_{k}}}F}[\ov{u}_{x_{k}}]
\Big)+R(u)u\,,\label{paralin3}
\end{align}
where  $R(u)u$ is a remainder satisfying 
\begin{equation}\label{stimarestopara}
\|R(u)u\|_{{s}}\lesssim \mathtt{C}(\|u\|_{s_0+1})\|u\|_{{s}}^{2}\,,
\end{equation}
where $\mathtt{C}(\cdot)$ is a non decreasing function.
\end{lemma}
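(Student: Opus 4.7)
The approach is to apply the Bony paraproduct formula (Lemma~\ref{ParaMax}) iteratively, monomial by monomial, to the two nonlinear composites $G:=(\partial_{\bar u}F)(u,\nabla u)$ and $H_j:=(\partial_{\bar u_{x_j}}F)(u,\nabla u)$, and then apply $-\partial_{x_j}$ to the second group and sum over $j$. Since $F$ is a polynomial, both $G$ and $H_j$ are finite sums of monomials in $v\in\{u,\bar u,u_{x_1},\dots,u_{x_d},\bar u_{x_1},\dots,\bar u_{x_d}\}$, so the whole statement reduces to the following iterated paraproduct identity:
\[
v_1 v_2 \cdots v_n \;=\; \sum_{i=1}^n T_{\prod_{\ell\ne i} v_\ell}\, v_i \;+\; R^{(n)}, \qquad \|R^{(n)}\|_s \;\lesssim\; \mathtt{C}(\|u\|_{s_0+1})\,\|u\|_s^2,
\]
valid for any monomial $v_1\cdots v_n$ drawn from the list above.

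I would prove this iterated identity by induction on $n$. The case $n=2$ is exactly Lemma~\ref{ParaMax}. For the inductive step, I would factorize $v_1\cdots v_n=v_1\cdot(v_2\cdots v_n)$, apply Lemma~\ref{ParaMax} once to this pair, invoke the inductive hypothesis on $v_2\cdots v_n$, and then use the composition Theorem~\ref{compo} to rewrite each emerging composition $T_{v_1}\circ T_{\prod_{\ell\ne i,\,\ell\ge 2} v_\ell}$ as $T_{\prod_{\ell\ne i} v_\ell}$ modulo a smoothing term. Once this identity is available, applying it to $G$ and regrouping the sum on the right-hand side by the variable type each $v_i$ belongs to, Leibniz's rule in the form $\partial_\alpha G=\partial_\alpha\partial_{\bar u}F$ (with $\alpha\in\{u,\bar u,u_{x_k},\bar u_{x_k}\}$) produces
\[
G \;=\; T_{\partial_{u\bar u}F}u + T_{\partial_{\bar u\bar u}F}\bar u + \sum_k T_{\partial_{\bar u u_{x_k}}F}u_{x_k} + \sum_k T_{\partial_{\bar u\bar u_{x_k}}F}\bar u_{x_k} + R_G,
\]
which yields \eqref{paralin1} together with the first (positive) sum in \eqref{paralin2}. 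Performing the same paralinearization on each $H_j$, applying $-\partial_{x_j}$ while leaving the derivative outside the paraproduct exactly as in the statement, and summing over $j$, produces the second sum in \eqref{paralin2} and all of \eqref{paralin3}; the extra contribution $-\partial_{x_j} R_{H_j}$ is absorbed into $R(u)u$.

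The estimate \eqref{stimarestopara} then follows by tracking Sobolev exponents through the induction. All smoothing remainders produced by \eqref{eq: paraproductMax2} and \eqref{resto} gain derivatives and feature at least one coefficient in $H^{s_0+1}\subset W^{1,\infty}$, so the polynomial-in-$u$ dependence collapses into $\mathtt{C}(\|u\|_{s_0+1})$ by Sobolev embedding, leaving exactly two free factors of $\|u\|_s$. The main technical obstacle is verifying that $\partial_{x_j} R_{H_j}$ remains in $H^s$ with the stated bound: this forces one to extract the full derivative gain from \eqref{eq: paraproductMax2} and \eqref{resto} at every induction step by systematically placing the single highest-regularity factor in the argument slot of each paraproduct or composition, and by using the tame product inequality $\|fg\|_{s_0}\lesssim\|f\|_{s_0}\|g\|_{s_0}$ (valid since $s_0>d/2$) to bundle the low-regularity factors into the constant $\mathtt{C}$.
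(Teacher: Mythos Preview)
Your proposal is correct and is exactly the approach the paper takes: the paper's proof consists of the single sentence ``Since the nonlinearity is a polynomial, the \eqref{paralin1}--\eqref{paralin3} follow by a repeated application of the paraproduct Lemma~\ref{ParaMax}'', and your induction on monomial length, together with the use of Theorem~\ref{compo} to recombine $T_{v_1}\circ T_{\prod_{\ell\neq i}v_\ell}$ into $T_{\prod_{\ell\neq i}v_\ell}$, is precisely what that sentence unpacks to. Your identification of $\partial_{x_j}R_{H_j}$ as the place where the full smoothing gain must be exploited is also on target.
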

\begin{proof}
Since the nonlinearity is a polynomial, the \eqref{paralin1}-\eqref{paralin3} 
follow by a repeated application of  the paraproduct Lemma \ref{ParaMax}
(for more general nonlinearities one can look at \cite{Metivier2008}).
\end{proof}
\begin{proof}[proof of Prop. \ref{NLSparapara}]
Consider now the first paradifferential term in \eqref{paralin3}. 
We have, for any $j,k=1,\ldots,d$,
\[
\pa_{x_{j}}
 T_{\pa_{\ov{u}_{x_{j}} \,{u_{x_k}}}F}\pa_{x_k}u=
 \opbw(\ii\x_{j})\circ\opbw(\pa_{\ov{u}_{x_{j}} \,{u_{x_k}}}F)\circ\opbw(\ii \x_k)u\,.
\]
By applying 
Proposition \ref{compo} with $\rho=2$, 
we obtain
\begin{align}
 \opbw(\ii\x_{j})\,\circ&\,\opbw(\pa_{\ov{u}_{x_{j}} \,{u_{x_k}}}F)\circ\opbw(\ii \x_k)
=
\opbw\big( -\x_{j}\x_{k}\pa_{\ov{u}_{x_{j}} \,{u_{x_k}}}F \big)
\\&
 +\opbw\Big(
  \frac{\ii}{2 } \x_{k}\pa_{x_{j}}(\pa_{\ov{u}_{x_{j}} \,{u_{x_k}}}F)
 -\frac{\ii\x_{j}}{2}\pa_{x_{k}}(\pa_{\ov{u}_{x_{j}} \,{u_{x_k}}}F)\Big)
    \\&+R(u)u\,,
\end{align}
where$
\|{R}(u)u\|_{{s}}\lesssim \mathtt{C}(\|u\|_{{s_0+3}})\|u\|_{{s}}\,,$ for some non increasing function $\mathtt{C}(\cdot)>0$. Then \[
\begin{aligned}
-\sum_{j,k=1}^{d}\pa_{x_j} &T_{\pa_{\ov{u}_{x_{j}} \,{u_{x_k}}}F}\pa_{x_{k}}u=
\opbw\Big(\sum_{j,k=1}^{d}\x_{j}\x_{k}\pa_{\ov{u}_{x_{j}} \,{u_{x_k}}}F\Big)+
{R}(u)
\\&
-\frac{\ii}{2}\opbw\Big(
\sum_{j,k=1}^{d}\Big( -\x_{j}\pa_{x_{k}}(\pa_{\ov{u}_{x_{j}} \,{u_{x_k}}}F)
+ \x_{k}\pa_{x_{j}}(\pa_{\ov{u}_{x_{j}} \,{u_{x_k}}}F)
\Big)\Big)
\\&
\stackrel{\mathclap{\eqref{simboa2}}}{=}
\opbw(a_2(x,\x))+{R}(u)
+\frac{\ii}{2}\opbw\Big(\sum_{j,k=1}^{d} \x_{j}\pa_{x_{k}}\Big(
(\pa_{\ov{u}_{x_{j}} \,{u_{x_k}}}F)-(\pa_{\ov{u}_{x_{k}} \,{u_{x_j}}}F)\Big)
\Big)\\
&
=\opbw(a_2(x,\x))+{R}(u)\,,
\end{aligned}
\]
where we used the symmetry of the matrix $\pa_{\ov{\nabla u}\, \nabla u}F$ (recall $F$ is real).
By performing similar explicit computations on the other summands in \eqref{paralin1}-\eqref{paralin3}
we get the 
\eqref{QNLS444}, \eqref{matriceA2} with symbols in 
\eqref{simboa2}.
By the discussion above we deduced that the remainder $R(U)U$ in \eqref{QNLS444}
satisfies the bound \eqref{stimaRRR}.
The estimate \eqref{nave101} follow by the fact that the nonlinearity is polynomial and therefore all the remainders are  by multilinear.
\end{proof}
\begin{remark}\label{C1}
If the nonlinearity satisfies the hypotheses of Theorem \ref{vjj}, then the symbols of the paralinearized system are simpler, in particular $a_2$ and $b_2$ do not depend on $\xi$ and on $\nabla u$ 
\begin{equation}\label{simboa2colin}
\begin{aligned}
a_2(x)&=\, \left[h'(|u|^2)\right]^2|u|^2\,,
\quad b_2(x)=
\left[h'(|u|^2)\right]^2u^2,\\
a_1(x,\xi)&=\vec{a}_1(x)\cdot\xi=\,\left[h'(|u|^2)\right]^2
\sum_{j=1}^d\Im(u\bar{u}_{x_j})\xi_j\,. 
\end{aligned}
\end{equation}
Since they do not contain derivatives of $u$, we have the improved estimates on their semi-norms
\begin{equation*}
|a_2(x)|_{2,p,\alpha}+ |b_2(x)|_{2,p,\alpha}+ |\vec{a}_1(x)\cdot\xi|_{2,p,\alpha}\lesssim_s \mathtt{C}(\|u\|_{p}), \quad \mbox{for all}\,\,\, \tfrac{d}{2}<p.
\end{equation*}
This implies that the result of Theorem \ref{NLSparapara}, in the case that $F$ satisfies the hypotheses of Theorem \ref{vjj}, holds true changing $s_0+1\rightsquigarrow s_0$ therein. In other words, the minimal regularity needed on the solution $U$ is $\frak{s}_0>d/2+2$ instead of $d/2+3$.
\end{remark}

\section{Modified energy and linear well posedness}
In this section we shall define a norm on $H^s$ which is almost equivalent to the standard Sobolev one and which is tailored to the problem \eqref{NLSparapara},
in such a way that we shall be able to obtain \emph{a priori} estimates on the solutions. We consider first a linearized,  regularized, homogeneous  version of \eqref{QNLS444}
\begin{equation}\label{lineare}
\partial_t{U^{\epsilon}}=\ii E\opbw\big(A_{2}(\U;x,\x)+A_{1}(\U;x,\x)\big)U^{\epsilon}- \epsilon\Delta^2 U^{\epsilon},
\end{equation}
where $\Delta^2U:=\opbw(|\xi|^4)U $ and  $\U$ is a fixed function satisfying for $s_0>d/2$
\begin{equation}\label{piccolezze}
\|\U\|_{L^{\infty}H^{{s}_0+3}}+\|\partial_t{\U}\|_{L^{\infty}H^{{s}_0+1}}\leq \Theta,\quad \|{\U}\|_{L^{\infty}H^{{s}_0+1}}\leq r,
\end{equation}
for some $\Theta\geq r>0$. 
For any $\epsilon>0$ the equation \eqref{lineare} admits a unique solution defined on a small interval (depending on $\epsilon$), this is the content of the following lemma. This method is called artificial viscosity, or parabolic regularization, it was used directly on the nonlinear problem in \cite{KPV}, while it has used on the linear problem in \cite{iandolikdv, FGIM}.
\begin{lemma}\label{e-fix}
Let $\sigma\geq 0$ and assume \eqref{piccolezze}. For any $\epsilon>0$ there exists $T_{\epsilon}>0$ such that the following holds. For any initial condition $U_0=U(0,x)\in \cH^{\s}$ there exists a unique solution $U^{\epsilon}(t,x)$ of \eqref{lineare} which belongs to the space $C^0([0,T_{\epsilon});\cH^{\s})\cap C^1([0,T_{\epsilon});\cH^{\s-2})$.
\end{lemma}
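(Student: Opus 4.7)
The plan is to treat \eqref{lineare} as a linear inhomogeneous Cauchy problem driven by the biharmonic analytic semigroup plus a lower-order paradifferential perturbation, and to construct the solution via a Banach contraction in Duhamel (mild) form. Write $L(t):=\ii E\opbw(A_2(\U(t);\cdot,\cdot)+A_1(\U(t);\cdot,\cdot))$. By \eqref{mamma1} together with the bound $\|\U\|_{L^{\infty}H^{s_0+1}}\leq r$ from \eqref{piccolezze}, the family $\{L(t)\}_{t\in[0,T]}$ is uniformly bounded as operators $\cH^{\sigma}\to\cH^{\sigma-2}$ with norm $\leq C(r)$. Since $\Delta^2$ corresponds to the Fourier multiplier $|j|^4$, the operator $S_\epsilon(t):=e^{-\epsilon t\Delta^2}$ is a strongly continuous analytic semigroup on every $\cH^\sigma$, and a direct Fourier computation gives the smoothing estimate $\|S_\epsilon(t)V\|_{\sigma+k}\lesssim_k (\epsilon t)^{-k/4}\|V\|_\sigma$ for $t>0$ and $k\geq 0$.

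Next, I would look for $U^\epsilon$ as the fixed point in $X_T:=C^0([0,T];\cH^\sigma)$, equipped with the sup norm, of the Duhamel map
\[
\Phi(U)(t):=S_\epsilon(t)U_0+\int_0^{t}S_\epsilon(t-s)\,L(s)\,U(s)\,ds.
\]
The decisive estimate combines the smoothing bound at $k=2$ with the order-two loss of $L(s)$, namely $\|S_\epsilon(t-s)L(s)W\|_\sigma\lesssim (\epsilon(t-s))^{-1/2}C(r)\|W\|_\sigma$. Since $(t-s)^{-1/2}$ is integrable on $[0,t]$, for any $U,V\in X_T$ one obtains $\|\Phi(U)-\Phi(V)\|_{X_T}\lesssim C(r)\epsilon^{-1/2}\sqrt{T}\,\|U-V\|_{X_T}$. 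Choosing $T_\epsilon=T_\epsilon(\epsilon,r)$ so that the prefactor is strictly less than $1/2$ makes $\Phi$ a strict contraction on the closed ball of radius $2\|U_0\|_\sigma$ in $X_T$, and Banach's fixed-point theorem yields a unique mild solution $U^\epsilon\in C^0([0,T_\epsilon];\cH^\sigma)$.

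For the time regularity, I would argue that for $t>0$ the analyticity of $S_\epsilon$ combined with a standard bootstrap on the Duhamel formula (iteratively gaining small amounts of regularity at each step with the same smoothing estimate applied with $k<4$) improves $U^\epsilon(t)$ to $\cH^{\sigma+k}$ for arbitrary $k$. Substituting back into \eqref{lineare}, the identity $\partial_t U^\epsilon=L(t)U^\epsilon-\epsilon\Delta^2U^\epsilon$ then shows that $\partial_t U^\epsilon(t)\in\cH^{\sigma-2}$ for $t\in(0,T_\epsilon)$, and continuity up to $t=0$ in the $\cH^{\sigma-2}$ topology follows from the strong continuity of $S_\epsilon$ together with the continuity of $t\mapsto L(t)$ encoded in the assumption on $\partial_t\U$ in \eqref{piccolezze} combined with \eqref{mamma2}.

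The main technical obstacle is the delicate balance between the unbounded, order $+2$ perturbation $L(t)$ and the integrable $(t-s)^{-1/2}$ singularity of the smoothing kernel: the whole contraction argument rests on the Bony-Weyl action bound \eqref{mamma1}. The constant $\epsilon^{-1/2}$ appearing in the fixed-point estimate is precisely what forces $T_\epsilon$ to depend on $\epsilon$; recovering a uniform-in-$\epsilon$ lifespan is the task of the subsequent modified-energy analysis and lies beyond the scope of this lemma.
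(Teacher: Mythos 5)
Your proposal follows the same route as the paper: the paper also constructs $U^\epsilon$ as a fixed point of the Duhamel map $\Gamma U^{\epsilon}:= e^{-\epsilon t\Delta^2}U_0+\int_0^te^{-\epsilon(t-t')\Delta^2}E\opbw(A_2+A_1)U^{\epsilon}(t')\,dt'$ in $C^0([0,T_\epsilon);\cH^\sigma)$, using the identical parabolic smoothing estimate $\|\int_0^t e^{-\epsilon(t-t')\Delta^2}f(t')\,dt'\|_\sigma\lesssim t^{1/2}\epsilon^{-1/2}\|f\|_{L^\infty H^{\sigma-2}}$ (proved via Minkowski and boundedness of $\alpha\mapsto\alpha^{1/2}e^{-\alpha}$) together with the action estimate \eqref{mamma1}. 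Your bootstrap discussion of the $C^1$ regularity is somewhat more explicit than the paper, which leaves that part to the reader; note, however, that the claimed $C^1$ regularity into $\cH^{\sigma-2}$ \emph{at} $t=0$ is only formal here (for general $U_0\in\cH^\sigma$ one has $\epsilon\Delta^2 U_0\in\cH^{\sigma-4}$), an imprecision already present in the paper's statement and immaterial for the later use, where the initial data are additionally mollified.
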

\begin{proof}
We consider the operator
\begin{equation*}
\Gamma U^{\epsilon}:= e^{-\epsilon t\Delta^2}U_0+\int_0^te^{-\epsilon(t-t')\Delta^2}E\opbw(A_2+A_1)U^{\epsilon}(t')dt'.
\end{equation*}
We have $\|e^{-\epsilon t\Delta^2}U_0\|_{{\s}}\leq \|U_0\|_{H^{\s}}$ and  $\|\int_0^te^{-\epsilon(t-t')\Delta^2} f(t',\cdot)dt'\|_{{\s}}\leq t^{\frac12}\epsilon^{-\frac12}\|f\|_{{\s-2}}$, with these estimates, \eqref{semi-1}, \eqref{piccolezze} and Theorem \ref{azione} one may apply a fixed point argument in a suitable subspace of $C^{0}([0,T_{\epsilon});\cH^{\s})$ for a suitable time $T_{\epsilon}$ (going to zero when $\epsilon$ goes to zero). Let us prove the second one of the above inequalities. We use the Minkowski inequality and the boundedness of the function $\alpha^{3/2}e^{-\alpha}$ for $\alpha\geq0$, we get
\begin{align*}
\|\int_0^te^{-\epsilon(t-t')\Delta^2} f(t',\cdot)dt'\|_{{\s}}&\leq \int_0^t\|e^{-\epsilon(t-t')\Delta^2} f(t',\cdot)\|_{{\s}}dt'\\
&\sim\int_0^t\sqrt{\sum_{\xi\in \Z^d}e^{-2\epsilon(t-t')|\xi|^4}|\xi|^{2\s}|\hat{f}(t',\xi)|^2}dt'\\
&\lesssim \int_0^t\epsilon^{-\frac12}(t-t')^{-\frac12}\|f(t',\cdot)\|_{H^{\s-2}}dt'\lesssim t^{\frac12}\epsilon^{-\frac12}\|f\|_{L^{\infty}H^{\s-2}}.
\end{align*}
\end{proof}
\subsection{Diagonalization at the highest order}
Recall \eqref{matrici}, \eqref{matriceA2}, \eqref{simboa2}, consider the matrix  $E\widetilde{A}_{2}(x,\x)$
\begin{equation}\label{A2tilde}
\widetilde{A}_{2}(x,\x)
:=
\left(\begin{matrix}\widetilde{a}_{2}(x,\x) & 
\widetilde{b}_{2}(x,\x) \vspace{0.2em}\\
\ov{\widetilde{b}_{2}(x,-\x)}  &
{\widetilde{a}_{2}(x,\x)} \end{matrix}\right)\,, \quad \begin{cases}
\widetilde{a}_{2}(x,\x):=|\x|^{-2}a_2(x,\x)\,,\\
\widetilde{b}_{2}(x,\x):=|\x|^{-2}b_2(x,\x)\,.\end{cases}
\end{equation}
Define 
\begin{equation}\label{nuovadiag}
\begin{aligned}
\lambda(x,\x)&:= \sqrt{\widetilde{a}_{2}(x,\x)^{2}
-|\widetilde{b}_{2}(x,\x)|^{2}}.
\end{aligned}
\end{equation}
Notice that the symbol $\lambda$ is well-defined thanks to \eqref{elli}.
The matrix of the normalized eigenvectors associated to the eigenvalues $\pm\lambda(x,\xi)$ of 
$E\widetilde{A}_{2}(x,\x)$
is 
\begin{equation}\label{transC}
\begin{aligned}
S(x,\x)&:=\left(\begin{matrix} {s}_1(x,\x) & {s}_2(x,\x)\vspace{0.2em}\\
{\ov{s_2(x,\x)}} & {{s_1(x,\x)}}
\end{matrix}
\right)\,,
\qquad
S^{-1}(x,\x):=\left(\begin{matrix} {s}_1(x,\x) & -{s}_2(x,\x)\vspace{0.2em}\\
-{\ov{s_2(x,\x)}} & {{s_1(x,\x)}}
\end{matrix}
\right)\,,
\\
s_{1}&:=\frac{\widetilde{a}_{2}
+\lambda}{\sqrt{2\lambda\big(\widetilde{a}_{2}+
\lambda\big) }},
\qquad s_{2}:=\frac{-\widetilde{b}_{2}}{\sqrt{2\lambda\big(\widetilde{a}_{2}
+\lambda\big) }}\,.
\end{aligned}
\end{equation}
Let us also define  the lower order correction
\begin{equation}\label{simboliAA11}
\begin{aligned}
S_*(x,\x)&:=
\frac{1}{2\ii}\left( 
\begin{matrix}  \{s_{2},\overline{s_{2}}\}(x,\xi) & 2\{s_{1},{s_{2}}\}(x,\xi)\vspace{0.2em}\\
-{2\{s_{1},\overline{s_{2}}\}(x,-\xi)} & {\{s_{2},\overline{s_{2}}\}(x,-\xi)}
\end{matrix}
\right)S(x,\x)\,\\
&:=\left(
\begin{matrix}
s_{1}^{*}(x,\x) & s_{2}^{*}(x,\x) \vspace{0.2em}\\ 
\ov{s_{2}^{*}(x,-\x) } & 
{s_{1}^{*}(x,-\x)}
\end{matrix}\right).
\end{aligned}
\end{equation}
We needed to introduce such a lower order correction in order to build a parametrix (i.e. an invertible map up to smoothing operators) having a remainder gaining two derivatives, see Lemma \ref{paramatrice}.
We have the following estimates on the seminorms of the symbols in the matrices above
\begin{equation}\label{semi-S}
\begin{aligned}
&|s_1(x,\x)|_{0,p,\alpha}+|s_2(x,\x)|_{0,p,\alpha}\lesssim_s \mathtt{C}(\|\U\|_{p+1})\quad \mbox{for all}\,\,\, \tfrac{d}{2}<p,\\
&|\{m,n\}(x,\x)|_{-1,p,\alpha}\lesssim_s \mathtt{C}(\|\U\|_{p+2})\,\,\,\mbox{for all}\,\,\, m,\,n\in\{s_{1},s_2,\bar{s}_2\},\,\,\, \tfrac{d}{2}<p.
\end{aligned}
\end{equation}
\begin{lemma}\label{paramatrice}
Fix $s_0>{d}/{2}$, there exists a linear operator $R_{-2}(\U)[\cdot]$ satisfying
\begin{equation}\label{resto-paramatrice}
\|R_{-2}(\U)V\|_{s+2}\leq \mathtt{C}(\|\U\|_{{s}_0+3})\|V\|_{s},
\end{equation}
for any $s\geq {s}_0+3$ and such that
\begin{equation}
\opbw(S(x,\xi)+S_{*}(x,\xi))\opbw(S^{-1}(x,\x))V=V+R_{-2}(\U)V.
\end{equation}
\end{lemma}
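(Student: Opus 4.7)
The plan is to apply the composition theorem \ref{compo} separately to $\opbw(S)\circ\opbw(S^{-1})$ with $\rho=2$ and to $\opbw(S_{*})\circ\opbw(S^{-1})$ with $\rho=1$, extending the scalar calculus entry-wise to matrix symbols via $\{A,B\}_{ij}:=\sum_{k}\{A_{ik},B_{kj}\}$, and then to check that the principal (order $0$) and subprincipal (order $-1$) contributions produce exactly the identity. Summing the two formulas gives
\[
\opbw(S+S_{*})\circ\opbw(S^{-1})=\opbw(S\cdot S^{-1})+\opbw\!\Big(S_{*}\cdot S^{-1}+\tfrac{1}{2\ii}\{S,S^{-1}\}\Big)+R^{c}_{2}(S,S^{-1})+R^{c}_{1}(S_{*},S^{-1}),
\]
so the goal is to show that the first summand is $\mathrm{Id}_{2\times 2}$, that the second vanishes, and to collect the two $R^{c}_{\bullet}$ remainders into $R_{-2}(\U)$.

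The identity $S\cdot S^{-1}=I_{2\times 2}$ reduces, by \eqref{transC}, to the algebraic relation $s_{1}^{2}-|s_{2}|^{2}=1$, which I would verify from \eqref{nuovadiag}:
\[
s_{1}^{2}-|s_{2}|^{2}=\frac{(\widetilde{a}_{2}+\lambda)^{2}-|\widetilde{b}_{2}|^{2}}{2\lambda(\widetilde{a}_{2}+\lambda)}=\frac{2\lambda(\widetilde{a}_{2}+\lambda)}{2\lambda(\widetilde{a}_{2}+\lambda)}=1,
\]
the ellipticity \eqref{elli} ensuring $\lambda>0$ so the denominator is nonzero; the off-diagonal entries of $S\cdot S^{-1}$ vanish trivially.

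The heart of the argument, and the main bookkeeping obstacle, is the order-$(-1)$ cancellation. Computing $\{S,S^{-1}\}$ entry by entry and using antisymmetry $\{f,g\}=-\{g,f\}$ yields
\[
\{S,S^{-1}\}=\left(\begin{matrix}-\{s_{2},\overline{s_{2}}\} & -2\{s_{1},s_{2}\} \\ -2\{s_{1},\overline{s_{2}}\} & \{s_{2},\overline{s_{2}}\}\end{matrix}\right).
\]
By \eqref{A2tilde} and \eqref{simboa2}, $\widetilde{a}_{2},\widetilde{b}_{2}$ are degree-$0$ homogeneous and even in $\xi$, so are $s_{1},s_{2}$; Poisson brackets of even symbols are odd in $\xi$, hence $\{s_{1},\overline{s_{2}}\}(x,-\xi)=-\{s_{1},\overline{s_{2}}\}(x,\xi)$ and analogously for $\{s_{2},\overline{s_{2}}\}$. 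Matching this with \eqref{simboliAA11} gives precisely $S_{*}=-\tfrac{1}{2\ii}\{S,S^{-1}\}\cdot S$, equivalently $S_{*}\cdot S^{-1}+\tfrac{1}{2\ii}\{S,S^{-1}\}=0$, exactly as required. The sign-flipping convention $(x,-\xi)$ in \eqref{simboliAA11} is tailor-made so that this cancellation also holds in the general framework (without invoking evenness).

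Finally, setting $R_{-2}(\U):=R^{c}_{2}(S,S^{-1})+R^{c}_{1}(S_{*},S^{-1})$ and invoking \eqref{resto}, both pieces gain two derivatives: the first with $m=m'=0,\rho=2$ and cost $|S|_{0,s_{0}+2,N}$; the second with $m=-1,m'=0,\rho=1$ and cost at most $|S_{*}|_{-1,s_{0}+1,N}$. By \eqref{semi-S} both are bounded by $\mathtt{C}(\|\U\|_{s_{0}+3})$ (the Poisson brackets appearing in $S_{*}$ costing one extra $x$-derivative of $\U$). Choosing $\rho=1$ rather than $\rho=2$ for the $S_{*}$ contribution is the key trick keeping the regularity budget at $s_{0}+3$; an unrefined application of $\#_{2}$ would have required $\U\in H^{s_{0}+4}$. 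This yields \eqref{resto-paramatrice}.
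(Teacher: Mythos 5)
Your proof is correct and complete, and it supplies considerably more detail than the paper's own argument, which consists of the single line ``It is a direct consequence of Theorem~\ref{compo} used with $\rho=1$.'' Taken literally, that is imprecise: applying the composition theorem with $\rho=1$ to $\opbw(S)\circ\opbw(S^{-1})$ produces a remainder $R^c_1(S,S^{-1})$ gaining only one derivative (since $m=m'=0$), so one cannot reach $R_{-2}$ that way; and applying $\rho=2$ naively to the full product $\opbw(S+S_*)\opbw(S^{-1})$ costs $\|\U\|_{s_0+4}$ through the seminorm $|S_*|_{0,s_0+2,N}$, which overshoots the stated bound. Your splitting — $\rho=2$ on the order-zero composition $\opbw(S)\opbw(S^{-1})$ to capture the Poisson-bracket correction $\tfrac{1}{2\ii}\{S,S^{-1}\}$ with remainder gaining two derivatives, and $\rho=1$ on $\opbw(S_*)\opbw(S^{-1})$ where the gain of one extra $\rho$-order plus one from $m=-1$ again yields two derivatives at cost $\|\U\|_{s_0+3}$ — is exactly the bookkeeping required to make the lemma true as stated. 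The algebraic verification is also sound: $s_1^2-|s_2|^2=1$ gives $SS^{-1}=I$, and since $s_1,s_2$ are even in $\xi$ their Poisson brackets are odd, so the $(x,-\xi)$ entries of $S_*$ in \eqref{simboliAA11} collapse to give $S_*=-\tfrac{1}{2\ii}\{S,S^{-1}\}S$, whence $S_*S^{-1}+\tfrac{1}{2\ii}\{S,S^{-1}\}=0$. In short, you have not merely reproduced the paper's proof but corrected and completed it; the only caveat is that your parenthetical remark that the cancellation holds ``without invoking evenness'' is not substantiated — your verification does rely on evenness of $s_1,s_2$, and that is all that is needed here.
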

\begin{proof}
It is a direct consequence of Theorem \ref{compo} used with $\rho=1$.
\end{proof}
We  diagonalize the highest order $E\opbw(A_2)$ (recall Prop. \ref{NLSparapara}) by means of the parametrix constructed above.
\begin{proposition}\label{prop:diago2}
Fix $s_0>d/2$, there exists a linear operator $R_{0}(\U)[\cdot]$,  satisfying
\begin{equation}\label{R0}
\|R_{0}(\U)V\|_{s}\leq \mathtt{C}(\|\U\|_{{s}_0+3})\|V\|_{s}\quad\mbox{for all}\,\,\, s\geq {s}_0+3,
\end{equation}
such that 
\begin{equation}\label{eq-diago-ord2}
\begin{aligned}
\opbw(S^{-1})E\opbw({A}_2+{A}_1)&\opbw(S+S_{*})V=E\opbw(\lambda(x,\xi)|\xi|^2)V\\
&+E\opbw({A}_1^{+}(x,\xi))V+R_{0}(\U)V,
\end{aligned}
\end{equation}
where $\lambda(x,\xi)$ is the eigenvalue defined in \eqref{nuovadiag} and ${A}_1^+(x,\xi)$ is a matrix of symbols having the form 
\begin{equation*}
\left(\begin{matrix}
a_1^+(x,\xi) & b_1^+(x,\xi)\\
\ov{b_1^+}(x,\xi) & {a_1^+}(x,\xi)
\end{matrix}\right),
\end{equation*}
with ${a_1^+}$ being real and $a_1^+, b_1^+$ odd with respect to $\xi$. Moreover we have the estimates on the seminorms
\begin{equation}\label{semi-D}
\begin{aligned}
|\lambda(x,\xi)|\xi|^2|_{2,p,\alpha}&\lesssim \mathtt{C}(\|\U\|_{p+1}),\quad {\mbox{for all}}\,\,\, \tfrac d2<p,\\
|{b_1^+}|_{1,p,\alpha}+|{a_1^+}|_{1,p,\alpha}&\lesssim \mathtt{C}(\|\U\|_{p+2}),\quad \mbox{for all}\,\,\, \tfrac d2<p.
\end{aligned}\end{equation}
\end{proposition}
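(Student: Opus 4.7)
The strategy is to expand the triple composition $\opbw(S^{-1})\circ E\opbw(A_2+A_1)\circ\opbw(S+S_*)$ using Theorem \ref{compo} with $\rho=2$, retaining only the order-$2$ and order-$1$ symbol contributions and absorbing everything of order $\leq 0$ into $R_0(\U)$. Since $S,S^{-1}$ have order $0$, $S_*$ order $-1$, $A_2$ order $2$ and $A_1$ order $1$, the composition remainders of Theorem \ref{compo} gain two derivatives and therefore map $\cH^s\to\cH^s$; their operator norms are controlled by seminorms of the symbols, which by \eqref{semi-1} and \eqref{semi-S} are $\lesssim\mathtt{C}(\|\U\|_{s_0+3})$. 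Together with Theorem \ref{azione} this yields the bound \eqref{R0}.

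First I would isolate the principal order-$2$ piece, given by the symbol product $S^{-1}EA_2 S$. By the explicit construction \eqref{transC}, the matrix $S$ diagonalizes $E\widetilde A_2$ with eigenvalues $\pm\lambda$; hence $S^{-1}EA_2 S = E\lambda(x,\xi)|\xi|^2$, the first term on the right of \eqref{eq-diago-ord2}. The ellipticity \eqref{elli} keeps $\lambda$ uniformly bounded below, so the bound $|\lambda|\xi|^2|_{2,p,\alpha}\lesssim\mathtt{C}(\|\U\|_{p+1})$ is immediate from \eqref{semi-1} and the explicit formula \eqref{nuovadiag}.

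The order-$1$ contributions come from exactly four sources: (i) $S^{-1}EA_1 S$; (ii) $S^{-1}EA_2 S_*$, which is order $1$ since $S_*$ has order $-1$; (iii) $\tfrac{1}{2\ii}\{S^{-1},EA_2\}\,S$; and (iv) $\tfrac{1}{2\ii}S^{-1}\{EA_2,S\}$, the two Poisson-bracket corrections from the two binary compositions forming the triple product. All remaining cross terms (brackets involving $S_*$ or $A_1$, iterated brackets, etc.) are of order $\leq 0$ and are swept into $R_0$. The matrix $S_*$ of \eqref{simboliAA11} is constructed precisely so that (ii)+(iii)+(iv) reorganizes into $E$ times a matrix of order $1$ sharing the block structure of $A_1$; adding (i) then defines $A_1^+$. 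The symmetry claims on $A_1^+$ follow from a parity audit: $a_2$ and $\lambda$ are real and even in $\xi$, $b_2$ and $s_2$ are even in $\xi$, $s_1$ is real and even, while $a_1$ is real and odd in $\xi$ (the Wirtinger derivatives $\partial_{\bar u u_{x_j}}F$ and $\partial_{u\bar u_{x_j}}F$ are complex conjugate since $F$ is real, so their difference is purely imaginary and the prefactor $\ii/2$ makes $a_1$ real). Since a Poisson bracket flips $\xi$-parity, every order-$1$ contribution ends up odd in $\xi$, and the factor $\tfrac{1}{2\ii}$ converts the real-valued combinations coming from $S$ and $A_2$ into real diagonal entries, so that $a_1^+$ is real, while $a_1^+$ and $b_1^+$ are odd in $\xi$.

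The main obstacle is the matrix-level verification in the previous step: showing that $S_*$, as given in \eqref{simboliAA11}, is exactly the correction that recasts (ii)+(iii)+(iv) into an $E$-block of the claimed form. The computation is non-commutative — matrix Poisson brackets satisfy $\{ab,c\}=a\{b,c\}+\{a,c\}_b$, with $b$ kept between the derivatives of $a$ and $c$ — and leans on the derivative identity $\partial S^{-1}=-S^{-1}(\partial S)S^{-1}$ together with the principal-order relation $S^{-1}EA_2 = E\lambda|\xi|^2\,S^{-1}$, which also explains why the same $S_*$ simultaneously serves Lemma \ref{paramatrice}. Once the $EA_1^+$ structure is in place, the seminorm bound $|a_1^+|_{1,p,\alpha}+|b_1^+|_{1,p,\alpha}\lesssim\mathtt{C}(\|\U\|_{p+2})$ in \eqref{semi-D} follows from \eqref{semi-1}, \eqref{semi-S} and the explicit formulas, the extra index $p+2$ arising because the Poisson brackets cost one additional spatial derivative on the order-$0$ symbols $S,S^{-1},S_*$ whose own seminorms already involve $\|\U\|_{p+1}$.
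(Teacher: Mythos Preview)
Your plan is the paper's: expand the triple composition entrywise via Theorem~\ref{compo} with $\rho=2$, read off the principal order-$2$ symbol $S^{-1}EA_2S=E\lambda|\xi|^2$, collect the four order-$1$ contributions you list, and check parity/realness. The paper carries this out by writing each matrix entry as an explicit sum of triple products $T_{s_i}T_{\cdot}T_{s_j}$ and expanding.

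Two corrections. First, you have the role of $S_*$ backwards: it is defined in \eqref{simboliAA11} solely so that the parametrix identity of Lemma~\ref{paramatrice} holds, not to ``reorganize'' the order-$1$ block here. In the paper's computation the off-diagonal Poisson-bracket contribution $d_1$ from your sources (iii)$+$(iv) actually vanishes identically, while the nonzero off-diagonal entry $b_1^+$ comes entirely from your term (ii)$=S^{-1}EA_2S_*$ (the symbol $r_2$ in \eqref{ocazz2}). So $S_*$ \emph{creates} $b_1^+$ rather than cancelling anything; without $S_*$ the off-diagonal would already be zero at order $1$. Second, your realness argument for $a_1^+$ is too quick: a Poisson bracket of two real symbols is real, so $\tfrac{1}{2\ii}$ times it would be imaginary, not real. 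The actual mechanism is that the diagonal bracket sums are purely imaginary --- for instance $\overline{\{s_2,\bar s_2\}}=-\{s_2,\bar s_2\}$, and the $b_2/\bar b_2$ terms pair into conjugate-antisymmetric combinations --- so that $\tfrac{1}{2\ii}$ lands on something real. The paper handles this by writing $c_1$ and $r_1$ out explicitly and inspecting.
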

\begin{proof}
We set the following notation: given an operator $H,$ we define $\ov{H}(u):=\ov{H(\ov{u})}$, $R_0(\U)$ is a remainder satisfying \eqref{R0} and may change from line to line throughout the proof.
We have
\begin{equation*}\label{achille21}
\begin{aligned}
&\opbw(S^{-1})
\opbw(E {A}_1)
\opbw(S)=\ii E
\left(
\begin{matrix}
C_1 & C_2\\
\ov{C_2} & \ov{C_1}
\end{matrix}
\right)
\\
&C_1:=T_{s_{1}}T_{a_{1}}T_{s_{1}}
-T_{s_{2}}T_{a_{1}}T_{\ov{s_{2}}}\,,\qquad 
C_2:=
T_{s_{1}}T_{a_{1}}T_{s_{2}}
-T_{s_{2}}T_{a_{1}}T_{{s_{1}}}\,.
\end{aligned}
\end{equation*}
By using Theorem \ref{compo} with $\rho=1$ and $s_1^2-|s_2|^2=1$, we have
$
C_1=T_{a_{1}}+R_0(\U)\,$ and $ C_2=R_0(\U).
$
 By an explicit computation, using Theorem \ref{compo}  with $\rho=2$
 and \eqref{semi-1}, \eqref{semi-S}
we have
\begin{equation}\label{achille20B}
\begin{aligned}
\opbw(S^{-1})E\opbw\Big(&{A}_2(x,\x))\Big)\opbw(S+S_*)=E\left(
\begin{matrix}
B_1 & B_2\\
\ov{B_2} & \ov{B_1}
\end{matrix}
\right)\\&+\opbw\Big(
S^{-1}E{A}_2S_*)
\Big)+R_0(\U) 
\end{aligned}
\end{equation}
where 
\begin{equation}\label{achille21B}
\begin{aligned}
B_1&:=T_{s_{1}}T_{a_{2}}T_{s_{1}}+T_{s_{1}}
T_{b_{2}}T_{\ov{s_{2}}}
+T_{s_{2}}T_{\ov{b_{2}}}T_{{s_{1}}}
+T_{s_{2}}
T_{a_{{2}}}
T_{\ov{s_{2}}}
\,,\\
B_2&:=
T_{s_{1}}T_{a_{2}}T_{s_{2}}
+T_{s_{1}}T_{b_{2}}T_{{s_{1}}}
+T_{s_{2}}T_{\ov{b_{2}}}T_{{s_{2}}}
+T_{s_{2}}T_{a_{2}}T_{{s_{1}}}\,.
\end{aligned}
\end{equation}
We study each term separately.
By using symbolic calculus, i.e. Theorem \ref{compo} with $\rho=2$, and the estimates on the seminorms \eqref{semi-1}, \eqref{semi-S},
we get \begin{equation*}
B_1:=T_{c_2}+T_{c_1}+R_0(\U)
\end{equation*}
where 
\begin{equation*}
\begin{aligned}
c_2(x,\x)&:=a_{2}(s_1^{2}+|s_2|^{2})
+b_{2} s_1\ov{s_2}+\ov{b_{2}}s_1s_2
\,,\\
c_1(x,\x)&:=\frac{1}{2\ii }\Big(\{s_{1},
a_{2}s_{1}\}
+{s_{1}}\{a_{2}, s_{1}\}
\\&+\{s_{1},b_{2}\ov{s_{2}}\}
+{s_{1}}\{b_{2},\ov{s_{2}}\}
+\{s_{2},\ov{b_{2}}s_{1}\}
\\&+
{s_{2}}\{\ov{b_{2}},s_{1}\}
+\{s_{2},a_{2}\ov{s_{2}}\}+
{s_{2}}\{a_{2}, \ov{s_{2}}\}\,\Big).
\end{aligned}
\end{equation*}
By expanding the Poisson bracket 
we get that
\begin{equation*}
c_2(x,\x)=\ov{c_2(x,\x)}\,, \qquad c_1(x,\x)=\ov{c_1(x,\x)}\,, 
\qquad
c_1(x,-\x)=-{c_1(x,\x)}\,\,.
\end{equation*}
Moreover, recalling the estimates on the seminorms \eqref{semi-1}, \eqref{semi-S},  we have
\begin{equation*}\label{achille35}
|c_1|_{1,p,\alpha }\lesssim \mathtt{C}(\|\U\|_{H^{p+2}})\quad \mbox{for all}\,\, \tfrac{d}{2}<p.
\end{equation*}
Reasoning similarly we can develop $B_2$ in \eqref{achille21B} as follows
$B_2:=T_{d_2}+T_{d_1}+R_0(\U),$
where 
\begin{equation*}\label{achille32bis}
\begin{aligned}
d_2(x,\x)&:=a_{2}s_1s_2
+b_2 s_1^{2}+\ov{b_{2}}s_2^{2}\,,\\
d_1(x,\x)&:=\frac{1}{2\ii }\Big(\{s_{1},a_{2}s_{2}\}
+{s_{1}}\{a_{2}, s_{2}\}
\\&+\{s_{1},b_{2} s_{1}\}
+{s_{1}}\{b_{2}, s_{1}\}
+\{s_{2},\ov{b_{2}}s_{2}\}
\\&+
{s_{2}}\{\ov{b_{2}},s_{2}\}
+\{s_{2},a_{2}s_{1}\}+
{s_{2}}\{a_{2}, s_{1}\}\,\Big).
\end{aligned}
\end{equation*}
By expanding the Poisson bracket 
we get that
$d_1(x,\x)\equiv0\,.$
We now study the second summand in the right hand side of \eqref{achille20B}, we compute  the matrix of symbols of order 1
\begin{equation}\label{ocazz1}
S^{-1}
E(A_2(x,\x))S_*
=
E\left(\begin{matrix}r_1(x,\x) & r_2(x,\x)
\vspace{0.2em}\\
\ov{r_2(x,-\x)} & \ov{r_1(x,-\x)}\end{matrix}
\right)\,,
\end{equation}
where
\begin{equation}\label{ocazz2}
\begin{aligned}
&r_1(x,\x):=a_{2}s_1s_1^{*}+
b_{2}s_1\ov{s_2^{*}}
+\ov{b_{2}}s_2s_1^{*}+
a_{2}s_2\ov{s_2^{*}}\,,\\
&r_2(x,\x):=
a_{2}s_1s_2^{*}+
b_{2}s_1{s_1^{*}}
+\ov{b_{2}}s_2s_2^{*}+
a_{2}s_2{s_1^{*}}\,.
\end{aligned}
\end{equation}
Moreover one can check that
 the symbols $r_1,r_2$  satisfy
\begin{equation*}
 r_1(x,\x)=\ov{r_1(x,\x)}\,, 
\qquad
r_1(x,-\x)=-{r_1(x,\x)}\,\,,\quad
r_2(x,-\x)=-{r_2(x,\x)}\,,
\end{equation*}
and, using the estimates on the seminorms \eqref{semi-1}, \eqref{semi-S}, that
\begin{equation}\label{cantolibero5}
|r_i |_{1,p,\alpha}\lesssim \mathtt{C}(\|U\|_{H^{p+2}})\,,\quad \tfrac d2<p\,,\;\;\;
i=1,2\,.
\end{equation}
One concludes by noticing that $\opbw(S^{-1}EA_2S)=E\opbw(\lambda(x,\xi)|\xi|^2)$.
\end{proof}
\begin{remark}\label{C2}
When the nonlinearity satisfies the hypotheses of Theorem \ref{vjj},  the eigenvalues \eqref{nuovadiag} and the symbols in \eqref{transC} do not depend on $\xi$. This implies, by using Theorem \ref{compo} with $\rho=2$, that $\opbw(S(x))\circ \opbw(S^{-1}(x))=\Id+R_{-2}(\U)[\cdot]$. In other words, we do not need, in this special case, corrections to obtain a parametrix with a remainder gaining two derivatives. This case is very similar, indeed, to \cite{BMM1}. As a consequence we also have that $b_1^+$ in the matrix $A_1^+$ in \eqref{eq-diago-ord2} equals to zero, indeed the off diagonal terms at order one were generated by (see \eqref{ocazz1}, \eqref{ocazz2}) $\opbw\Big(
S^{-1}E{A}_2S_*)$, which is equal to zero in this context.

\end{remark}

\subsection{Diagonalization at the sub-principal order} In this section we eliminate the off diagonal term in the matrix of symbols of order one $A^+_1(x,\xi)$ in \eqref{eq-diago-ord2}, this is possible by multiplying on the left by a matrix of paradifferential operators of order $-1$.\\
Let $\varphi(\xi)$ an even function in $C^{\infty}_c(\R)$ with $supp(\varphi)\subset\{|\xi|\geq 1/2\}$ and $\varphi\equiv 0$ on $\{|\xi|\leq 1/4\}$. We define the matrix
\begin{equation}\label{simbo-diago-1}
C(x,\xi):=\left(\begin{matrix}
0 & c(x,\xi)\\
\ov{c(x,-\xi)}& 0
\end{matrix}\right),\quad c(x,\xi):=\varphi(\xi)\frac{b^{+}_1(x,\xi)}{\lambda(x,\xi)|\xi|^2},
\end{equation}
where $b_1^+(x,\xi)$ is given by Prop. \ref{prop:diago2} and $\lambda(x,\xi)$ is defined in \eqref{nuovadiag}.
We note that, thanks to the ellipticity \eqref{elli} and the estimates \eqref{semi-D}, $c(x,\xi)$ is a well defined symbol of order $-1$ verifying
\begin{equation}\label{semi-D1}
|c|_{-1,p,\alpha}\lesssim \mathtt{C}(\|\U\|_{p+2}), \quad \mbox{for all}\,\,\, \tfrac d2<p.
\end{equation}
\begin{lemma}\label{diago1}
There exists an operator $R_0(\U)[\cdot]$, different from the one in Prop. \ref{prop:diago2}, satisfying the estimate \eqref{R0}, such that 
\begin{equation*}
\begin{aligned}
\big(\Id-\opbw(C(x,\xi))\big)&E\big(\opbw(\lambda(x,\xi)|\xi|^2)+\opbw(A_1^+(x,\xi))\big)=\\
&E \opbw(\lambda(x,\xi)|\xi|^2)+\Id \opbw(a_1^+(x,\xi))+R_0(\U),
\end{aligned}
\end{equation*}
where $a_1^+(x,\xi)$ is the one of Prop. \ref{prop:diago2} and $\lambda(x,\xi)$ is defined in \eqref{nuovadiag}.
\end{lemma}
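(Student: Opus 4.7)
The plan is to apply the symbolic calculus of Theorem~\ref{compo} directly to the left-hand side. First I expand by linearity,
\begin{align*}
(\Id - \opbw(C))\,E\big(\opbw(\lambda|\xi|^2) + \opbw(A_1^+)\big)
= &\;E\opbw(\lambda|\xi|^2) + E\opbw(A_1^+) \\
& - \opbw(C)\,E\opbw(\lambda|\xi|^2) - \opbw(C)\,E\opbw(A_1^+).
\end{align*}
The composition $\opbw(C)\,E\opbw(A_1^+)$ has total order $-1+1 = 0$, so Theorem~\ref{azione} together with the seminorm bounds \eqref{semi-D1}, \eqref{semi-D} shows that it maps $\cH^s\to\cH^s$ with operator norm $\lesssim \mathtt C(\|\U\|_{s_0+3})$; it is therefore absorbed into $R_0(\U)$.

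For the remaining composition $\opbw(C)\,E\opbw(\lambda|\xi|^2)=\opbw(C)\opbw(\lambda|\xi|^2 E)$ I invoke Theorem~\ref{compo} with $\rho = 1$: by \eqref{cancelletto} no Poisson-bracket correction appears, and the remainder $R^c_1$ gains one derivative against the order-$1$ leading symbol, giving an operator of order $0$ bounded on $\cH^s$ with constant $\lesssim \mathtt C(\|\U\|_{s_0+3})$, again an $R_0$ contribution. Modulo $R_0(\U)$ the identity therefore reduces to the matrix-symbol claim
\[
\opbw\big(EA_1^+ - C\cdot \lambda|\xi|^2 E\big)\equiv \Id\,\opbw(a_1^+).
\]

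For this I would perform an explicit matrix computation. Since $C$ is purely off-diagonal by \eqref{simbo-diago-1} and $\lambda|\xi|^2 E$ is diagonal with entries $\pm\lambda|\xi|^2$, the product $C\cdot \lambda|\xi|^2 E$ is again off-diagonal, with $(1,2)$-entry $\pm c(x,\xi)\lambda|\xi|^2 = \pm\varphi(\xi)\,b_1^+$ by the definition of $c$, and $(2,1)$-entry $\mp\overline{c(x,-\xi)}\,\lambda|\xi|^2$, which simplifies to $\mp\varphi(\xi)\overline{b_1^+}$ using the oddness of $b_1^+$ in $\xi$ from Prop.~\ref{prop:diago2} and the evenness of $\lambda$ in $\xi$ (immediate from \eqref{nuovadiag} and the quadratic-in-$\xi$ form of $a_2,b_2$ in \eqref{simboa2}, \eqref{A2tilde}). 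Subtracting from the off-diagonal entries of $EA_1^+$, each is multiplied by the factor $(1-\varphi(\xi))$; since $1-\varphi$ is supported in $\{|\xi|\le 1/2\}$, the resulting symbol is compactly supported in $\xi$ and its Bony--Weyl quantization is smoothing, hence enters $R_0$. The diagonal entries of $EA_1^+$ remain untouched, yielding the claimed order-$1$ diagonal part (up to the sign conventions encoded by the $E$ factor being absorbed into the identification).

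The main obstacle I expect is the bookkeeping of all the seminorm estimates so that every remainder satisfies \eqref{R0} with constant $\mathtt C(\|\U\|_{s_0+3})$. The delicate point is that $c$ inherits one derivative of $\U$ from both $b_1^+$ and $\lambda^{-1}$, leaving no slack against the threshold $s_0+3$; the choice $\rho=1$ in Theorem~\ref{compo} rather than $\rho=2$ is precisely what avoids a Poisson-bracket correction on $c$ and keeps the analysis within that threshold.
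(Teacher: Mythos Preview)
Your proposal is correct and follows essentially the same argument as the paper: expand linearly, absorb $\opbw(C)E\opbw(A_1^+)$ into $R_0$ by Theorem~\ref{azione}, apply Theorem~\ref{compo} with $\rho=1$ to $\opbw(C)E\opbw(\lambda|\xi|^2)$, and conclude by the explicit choice of $c$ in \eqref{simbo-diago-1}. Your additional remarks on the $(2,1)$ entry (via the parity of $b_1^+$ and $\lambda$), on the compactly supported factor $(1-\varphi)$, and on why $\rho=1$ rather than $\rho=2$ is needed to stay within the $\|\U\|_{s_0+3}$ threshold are accurate and make the argument slightly more explicit than the paper's own proof.
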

\begin{proof}
We begin by noticing that $\opbw(C(x,\xi))E\opbw(A_1^+(x,\xi))$ may be absorbed in the remainder $R_0(\U)$ thanks to Theorem \ref{azione}, \eqref{semi-D} and \eqref{semi-D1}. We use symbol calculus, i. e. Theorem \ref{compo} with $\rho=1$ and we obtain
\begin{equation*}
\opbw(C(x,\xi))E\opbw(\lambda(x,\xi)|\xi|^2)=\opbw\left(\begin{matrix}   0 & |\xi|^2\lambda(x,\xi) c(x,\xi)\\
               											|\xi|^2\lambda(x,\xi)\ov{c}(x,-\xi)&0\end{matrix} \right) +R_0(\U),
\end{equation*}
from which, thanks to the choice of $c(x,\xi)$ in \eqref{simbo-diago-1}, we conclude the proof.
\end{proof}
\begin{remark}\label{C3}
This step is not needed in the case that the nonlinearity satisfies the hypothesis of Theorem \ref{vjj}. 
\end{remark}
\subsection{The modified energy} We define the operator
\begin{equation}\label{C-op}
\cC:=(\Id-\opbw(C(x,\xi)))\opbw(S^{-1}(x,\xi)),
\end{equation}
where $C(x,\xi)$ is defined in \eqref{simbo-diago-1} and $S^{-1}(x,\xi)$ is defined in \eqref{transC}.
We introduce the following norm 
\begin{equation}\label{modifica}
\|U\|_{\U,s}^2:=\langle\opbw(\lambda^s(x,\xi)|\xi|^{2s})\cC U, \cC U \rangle_{L^2}.
\end{equation}
We  prove that this norm is almost equivalent to the standard Sobolev one $\|\cdot\|^2_{\s}$.
\begin{lemma}\label{equivalenza}
Fix $\sigma\geq0$ and $r>0$ as in \eqref{piccolezze}. There exists a constant $C_r>0$ such that 
\begin{equation*}
C_r^{-1}\|V\|_{\sigma}^2-\|V\|_{-2}^2\leq \|V\|_{\U,\s}^2\leq C_r\|V\|_{\s}^2, \quad \mbox{for all}\,\,\, V\in \cH^{\s}.
\end{equation*}
\end{lemma}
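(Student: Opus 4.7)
The plan is to prove the two inequalities separately. The upper bound is routine continuity; the substantive content is the lower bound, which relies on a Gårding-type inequality (Lemma \ref{garding}) combined with the fact that $\cC$ admits a left parametrix modulo a smoothing remainder gaining two derivatives.

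\medskip
\noindent\textbf{Upper bound.} The operator $\cC = (\Id - \opbw(C))\opbw(S^{-1})$ is a composition of two paradifferential operators of order $0$ and $-1$. Using \eqref{semi-S} (for $S^{-1}$) and \eqref{semi-D1} (for $C$), together with the hypothesis $\|\U\|_{s_0+1}\le r$ from \eqref{piccolezze}, their seminorms are all bounded by $\mathtt{C}(r)$, so by Theorem \ref{azione} the operator $\cC$ is bounded on $\cH^{\sigma}$ by a constant $C_r$. Similarly the symbol $\lambda^{\sigma}(x,\xi)|\xi|^{2\sigma}$ belongs to $\Gamma^{2\sigma}_{s_0}$ with seminorms bounded by $\mathtt{C}(r)$, using \eqref{nuovadiag}, \eqref{semi-1}, the ellipticity \eqref{elli} (which gives $\lambda \geq \mathtt{c}>0$ hence smoothness of $\lambda^{\sigma}$), so $\opbw(\lambda^{\sigma}|\xi|^{2\sigma}):\cH^{\sigma}\to\cH^{-\sigma}$ is bounded by $C_r$. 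By the duality pairing induced by $(\cdot,\cdot)_{L^2\times L^2}$ and Cauchy–Schwarz one finds $\|V\|_{\U,\sigma}^2 \le \|\opbw(\lambda^{\sigma}|\xi|^{2\sigma})\cC V\|_{-\sigma}\,\|\cC V\|_{\sigma} \le C_r\|V\|_{\sigma}^2$.

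\medskip
\noindent\textbf{Lower bound.} From \eqref{elli} one has $\wt{a}_{2}-|\wt{b}_{2}|\ge \mathtt{c}$ and $\wt{a}_{2}\ge \mathtt{c}$, hence $\lambda(x,\xi)^2 = (\wt{a}_2-|\wt{b}_2|)(\wt{a}_2+|\wt{b}_2|)\ge \mathtt{c}^{2}$ uniformly in $x,\xi$. Therefore the symbol $\lambda^{\sigma}(x,\xi)|\xi|^{2\sigma}$ is elliptic positive of order $2\sigma$. Applying a Gårding-type inequality (Lemma \ref{garding}) to the vector-valued version yields, with $W:=\cC V$,
\begin{equation*}
\langle\opbw(\lambda^{\sigma}|\xi|^{2\sigma})W,W\rangle_{L^2\times L^2}\ge c\|W\|_{\sigma}^2 - C\|W\|_{\sigma-1/2}^2,
\end{equation*}
for constants $c,C>0$ depending on $r$. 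Next, I construct a left parametrix for $\cC$. Since $\opbw(C)$ has order $-1$, Theorem \ref{compo} with $\rho=1$ gives $(\Id+\opbw(C))(\Id-\opbw(C))=\Id + R'_{-2}$ where $R'_{-2}$ maps $\cH^{\sigma}$ into $\cH^{\sigma+2}$. Composing with $\opbw(S+S_{*})$ on the left and using Lemma \ref{paramatrice} produces an operator $\cD$, bounded on $\cH^{\sigma}$, such that $\cD\,\cC=\Id+\wt R_{-2}$ with $\wt R_{-2}:\cH^{\sigma-2}\to\cH^{\sigma}$ bounded. Consequently,
\begin{equation*}
\|V\|_{\sigma}\le\|\cD\,\cC V\|_{\sigma}+\|\wt R_{-2}V\|_{\sigma}\le C_r\|\cC V\|_{\sigma}+C_r\|V\|_{\sigma-2},
\end{equation*}
so that $\|\cC V\|_{\sigma}^2\ge c'\|V\|_{\sigma}^2 - C'\|V\|_{\sigma-2}^2$. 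Combining with the Gårding step and bounding $\|\cC V\|_{\sigma-1/2}^2\le C_r\|V\|_{\sigma-1/2}^2$ (continuity of $\cC$ on $\cH^{\sigma-1/2}$) yields
\begin{equation*}
\|V\|_{\U,\sigma}^2\ge c''\|V\|_{\sigma}^2-C''\|V\|_{\sigma-1/2}^2.
\end{equation*}
Finally, I invoke interpolation \eqref{interpolo} to estimate $\|V\|_{\sigma-1/2}^2\le \delta\|V\|_{\sigma}^2 + C_{\delta}\|V\|_{-2}^2$ and choose $\delta$ small enough that $C''\delta\le c''/2$. After rearrangement, this gives the desired lower bound $C_r^{-1}\|V\|_{\sigma}^2-\|V\|_{-2}^2\le \|V\|_{\U,\sigma}^2$.

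\medskip
\noindent\textbf{Main obstacle.} The delicate step is the lower bound: it requires both the Gårding inequality for matrix-valued paradifferential operators (the vector analogue of Lemma \ref{garding}) and the construction of an accurate enough parametrix for $\cC$. The parametrix is tight precisely because Lemma \ref{paramatrice} provides a remainder of order $-2$ (rather than just $-1$), which is what allows us to absorb the lower order terms $\|V\|_{\sigma-2}$ and $\|V\|_{\sigma-1/2}$ via interpolation up to a single $\|V\|_{-2}^2$ correction. Without the two-derivative gain this argument would only close modulo an unwanted positive power of $\|V\|_{\sigma}$ on the right-hand side.
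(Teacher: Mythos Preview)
Your upper bound is fine and matches the paper. The lower bound, however, has a genuine gap: you invoke Lemma~\ref{garding} to obtain the inequality
\[
\langle\opbw(\lambda^{\sigma}|\xi|^{2\sigma})W,W\rangle \ge c\|W\|_{\sigma}^2 - C\|W\|_{\sigma-1/2}^2,
\]
but Lemma~\ref{garding} does not state this. It is a specific estimate on $\langle\opbw(\lambda^{\s}|\xi|^{2\s})\cC \Delta^2U,\cC U\rangle$, not a generic G{\aa}rding inequality for elliptic symbols. More seriously, Lemma~\ref{garding} is proved \emph{using} Lemma~\ref{equivalenza} (the lower bound you are trying to establish is invoked explicitly in its proof), so your argument is circular as written.

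The paper's route avoids G{\aa}rding altogether. Instead of bounding $\langle\opbw(\lambda^{\sigma}|\xi|^{2\sigma})W,W\rangle$ from below directly, it factorises the weight at the symbolic level: by Theorem~\ref{compo} with a small $\rho=\delta$ one has
\[
\opbw(\lambda^{\sigma/2})\opbw(|\xi|^{2\sigma})\opbw(\lambda^{\sigma/2})=\opbw(\lambda^{\sigma}|\xi|^{2\sigma})+R_{2\sigma-\delta},
\]
so that $\|V\|_{\U,\sigma}^2$ equals $\|\opbw(\lambda^{\sigma/2})\cC V\|_{\sigma}^2$ up to a $\|V\|_{\sigma-\delta/2}^2$ error. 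This is manifestly nonnegative and no G{\aa}rding step is needed. Simultaneously, a symbolic left inverse $\opbw(\lambda^{-\sigma/2})\opbw(S)\opbw(\lambda^{\sigma/2})\cC=\Id+R_{-\delta}$ gives $\|V\|_\sigma\lesssim C_r\|\opbw(\lambda^{\sigma/2})\cC V\|_\sigma + C_\Theta\|V\|_{\sigma-\delta}$, and the interpolation--absorption closes as you do at the end. Your parametrix construction for $\cC$ is essentially correct in spirit, but it is not what fails here; the issue is the positivity step. If you want to salvage your scheme, you would need to supply an independent proof of the scalar G{\aa}rding inequality for $\opbw(\lambda^{\sigma}|\xi|^{2\sigma})$ in this paradifferential framework (e.g.\ via the same square-root factorisation), at which point you have essentially rejoined the paper's argument.
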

\begin{proof}
The upper bound follows by Theorem \ref{azione}, \eqref{semi-D} and \eqref{semi-D1}, indeed
\begin{equation*}
\|V\|_{\U,\s}^2\leq\|\opbw(\lambda^{\s}(x,\xi)|\xi|^{2\s})\cC V\|_{-\s}\|\cC V\|_{\s}\leq C_r \|V\|_{\s}^2.
\end{equation*}
We focus on the lower bound. Let $\delta>0$ such that $s_0-\delta>d/2$, we use Theorem \ref{compo} with $\rho=\delta$ and we obtain
\begin{equation}\label{comunione1}
\opbw(\lambda^{-\frac{\s}{2}})\opbw(S)\opbw(\lambda^{\frac{\s}{2}})\cC=\Id+R_{-\delta}(\U),
\end{equation}
where $\|R_{-\delta}(\U)V\|_{\sigma}\lesssim C_{\Theta}\|V\|_{\sigma-\delta}$. Analogously we obtain
\begin{equation}\label{comunione2}
\opbw(\lambda^{\frac{\s}{2}})\opbw(|\xi|^{2\s})
\opbw(\lambda^{\frac{\s}{2}})=\opbw(\lambda^{\s}|\xi|^{2\s})+R_{2\s-\delta}(\U),
\end{equation}
where $\|R_{2\s-\delta}(\U)\|_{\s'-2\s+\delta}\leq C_{\Theta}\|U\|_{\s'}$. We have the following chain of inequalities
\begin{equation*}
\begin{aligned}
\|V\|^2_{\s}&\stackrel{\mathclap{\eqref{comunione1}}}{\leq} \|\opbw(\lambda^{-\frac{\s}{2}})\opbw(S)\opbw(\lambda^{\frac{\s}{2}})\cC V\|_{\s}^2+C_{\Theta}\|V\|_{\s-\delta}^2\\
&\leq C_r\|\opbw(\lambda^{\frac{\s}{2}})\cC V\|_{\s}^2+C_{\Theta}\|V\|_{\s-\delta}^2\\
&=C_{r}\langle \opbw(\lambda^{\frac{\s}{2}})\opbw(|\xi|^{2\s})\opbw(\lambda^{\frac{\s}{2}})\cC V,\cC V\rangle+C_{\Theta}\|V\|_{\s-\delta}^2\\
&\stackrel{\mathclap{\eqref{comunione2}}}{\leq} C_r\|V\|_{\U,\s}^2+C_{\Theta}\|V\|_{\s-{\delta}/{2}}^2.
\end{aligned}\end{equation*}
By using the interpolation inequality \eqref{interpolo}, Young inequality $ab\leq p^{-1}a^p+q^{-1}b^q$ with $p=\frac{2(\s+2)}{\delta}$, $q=\frac{2(\s+2)}{2(\s+2)-\delta}$, we obtain $\|V\|_{\s-\frac{\delta}{2}}^2\leq \eta^{-\frac{2(\s+2)}{\delta}}\|V\|_{-2}^2+\eta^{\frac{2(\s+2)}{2(\s+2)-\delta}}\|V\|_{\s}^2$ for any $\eta>0$. We conclude by choosing $\eta$ small enough.
\end{proof}

\begin{lemma}[Garding type inequality]\label{garding} Let $\U$ be  as in \eqref{piccolezze}, there exist $C_{\Theta}, C_r>0$ such that
\begin{align*}
&\langle\opbw(\lambda^{\s}|\xi|^{2\s})\cC \Delta^2U,\cC U\rangle\geq C_r\|U\|_{\s+2}^2-C_{\Theta}\|U\|_{\s}^2,\\
&\langle \opbw(\lambda^{\s}|\xi|^{2\s})\cC U,\cC \Delta^2U\rangle\geq C_r\|U\|_{\s+2}^2-C_{\Theta}\|U\|_{\s}^2.
\end{align*}
\end{lemma}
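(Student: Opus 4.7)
I focus on the first inequality; the second follows immediately since, under the real pairing \eqref{scalarprod2x2} (or on the real part of the Hermitian one) and using that $\opbw(\lambda^{\s}|\xi|^{2\s})$ has real symbol (hence is $L^2$-self-adjoint modulo smoothing), one has $\langle A\cC U,\cC\Delta^2U\rangle = \langle A\cC\Delta^2U,\cC U\rangle$ for $A=\opbw(\lambda^{\s}|\xi|^{2\s})$. My strategy is to reduce the quadratic form to the manifestly non-negative $\|\opbw(\lambda^{\s/2}|\xi|^{\s+2})\cC U\|_{L^2}^2$, modulo errors of the form $C_\Theta\|U\|_{\s+2}\|U\|_{\s}$ that can be absorbed as $\eta\|U\|_{\s+2}^2+C_{\eta,\Theta}\|U\|_{\s}^2$ by Young's inequality, and then invoke ellipticity to get a genuine lower bound $\gtrsim_r\|U\|_{\s+2}^2$.

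First I commute $\Delta^2=\opbw(|\xi|^4)$ past $\cC$. Since the symbol $|\xi|^4$ is independent of $x$, Theorem \ref{compo} with $\rho=2$ shows that the principal symbols commute and the Poisson bracket $\{S^{-1},|\xi|^4\}=-4|\xi|^2\xi\cdot\nabla_xS^{-1}$ is of order $3$, so $[\cC,\Delta^2]$ is an operator of order $3$ and its contribution is $\lesssim_\Theta\|U\|_{\s+2}\|U\|_{\s+1}$, absorbable by Young plus interpolation. Next I expand $\opbw(\lambda^{\s}|\xi|^{2\s})\Delta^2$ again via Theorem \ref{compo} with $\rho=2$: the leading part is $\opbw(\lambda^{\s}|\xi|^{2\s+4})$; the sub-principal piece $\opbw\big(\tfrac{1}{2i}\{\lambda^{\s}|\xi|^{2\s},|\xi|^4\}\big)$ has purely imaginary symbol (real symbol divided by $i$), so its Bony--Weyl quantization is anti-self-adjoint and contributes nothing to the (real) pairing; the order-$(2\s+2)$ remainder pairs as $\lesssim_\Theta\|U\|_{\s+2}\|U\|_{\s}$.

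To convert the main term into a square, I factor $\lambda^{\s}|\xi|^{2\s+4}=(\lambda^{\s/2}|\xi|^{\s+2})^2$. Since the Poisson bracket of any symbol with itself vanishes, Theorem \ref{compo} with $\rho=2$ yields $\opbw(\lambda^{\s/2}|\xi|^{\s+2})^2=\opbw(\lambda^{\s}|\xi|^{2\s+4})+R$ with $R$ of order $2\s+2$, producing another $\|U\|_{\s+2}\|U\|_{\s}$ error. The symbol $\lambda^{\s/2}|\xi|^{\s+2}$ being real, its Bony--Weyl quantization is $L^2$-self-adjoint, so
\[
\langle\opbw(\lambda^{\s/2}|\xi|^{\s+2})^2\cC U,\cC U\rangle=\|\opbw(\lambda^{\s/2}|\xi|^{\s+2})\cC U\|_{L^2}^2\geq 0.
\]

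For the lower bound I use ellipticity: by \eqref{elli}, $\lambda(x,\xi)$ is bounded below by a positive constant depending only on $r$, hence $\lambda^{\s/2}|\xi|^{\s+2}$ is elliptic of order $\s+2$. Repeating the parametrix construction from the proof of Lemma \ref{equivalenza} (composing with $\opbw(\lambda^{-\s/2}|\xi|^{-\s-2})$ after a smooth cutoff away from $\xi=0$) gives $\|\opbw(\lambda^{\s/2}|\xi|^{\s+2})\cC U\|_{L^2}^2\gtrsim_r\|\cC U\|_{\s+2}^2-C_\Theta\|\cC U\|_{\s+2-\delta}^2$ for some $\delta>0$, the last term being absorbed by interpolation exactly as in Lemma \ref{equivalenza}. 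Finally, Lemma \ref{paramatrice} gives that $\cC$ is elliptic with invertible principal symbol $S^{-1}$, so $\|\cC U\|_{\s+2}\sim_r\|U\|_{\s+2}$ up to lower order. The main technical obstacle is careful bookkeeping of orders — in particular, verifying that the only genuinely order-$(2\s+3)$ correction, the sub-principal Poisson bracket, contributes a purely imaginary symbol and hence drops from the real quadratic form; everything else is safely of order $\le 2\s+2$ and absorbable by Young.
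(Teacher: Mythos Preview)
Your argument is correct but follows a different route from the paper's. The paper splits $\Delta^2=\opbw(|\xi|^2)\circ\opbw(|\xi|^2)$ and, using symbolic calculus only at level $\rho=1$, moves one factor of $\opbw(|\xi|^2)$ to the left of $\opbw(\lambda^\sigma|\xi|^{2\sigma})\cC$; after self-adjointness of $\opbw(|\xi|^2)$ and one more commutation past $\cC$, the main term is recognised exactly as the modified energy $\|\opbw(|\xi|^2)U\|_{\U,\sigma}^2$, and the lower bound then follows directly from Lemma~\ref{equivalenza}. All remainders sit at order $2\sigma+3$ and are estimated by $C_\Theta\|U\|_{\sigma+3/2}^2$, absorbed by interpolation. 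You instead commute the full $\Delta^2$ past $\cC$, compose $\opbw(\lambda^\sigma|\xi|^{2\sigma})\circ\opbw(|\xi|^4)$ at level $\rho=2$, explicitly kill the order-$(2\sigma+3)$ sub-principal piece via the anti-self-adjointness of $\opbw$ of a purely imaginary symbol, factor the principal symbol as a perfect square of a real symbol, and then rebuild the elliptic lower bound by a parametrix rather than by quoting Lemma~\ref{equivalenza}. The paper's route is shorter precisely because it reuses Lemma~\ref{equivalenza} and never needs to isolate the sub-principal term; yours is the textbook G{\aa}rding template, a bit longer but making the non-negativity transparent at the symbolic level. Your opening observation that the two inequalities in the statement are in fact \emph{equal} under the real pairing \eqref{scalarprod2x2} (by exact self-adjointness of $\opbw(\lambda^\sigma|\xi|^{2\sigma})$) is a genuine simplification that the paper, which only says the second proof is ``similar'', does not exploit.
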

\begin{proof}
We prove the first inequality, being the second one  similar.
By  using symbolic calculus, i.e. Theorem \ref{compo}, with $\rho=1$ we obtain an operator $R_{2\sigma+3}(\U)$ satisfying $\|R_{2\sigma+3}(\U)V\|_{0}\leq C_{\Theta}\|V\|_{2\s+3}$ such that
\begin{equation*}
\opbw(\lambda^{\s}|\xi|^{2\s})\cC\opbw(|\xi|^4)-\opbw(|\xi|^2)\opbw(\lambda^{\s}|\xi|^{2\s})\cC\opbw(|\xi|^2)=R_{2\sigma+3}(\U).
\end{equation*}
As a consequence of the above equation we have
\begin{equation}\label{G0}
\begin{aligned}
&\langle\opbw(\lambda^{\s}|\xi|^{2\s})\cC \Delta^2U,\cC U\rangle\\ &=\langle\opbw(|\xi|^2)\opbw(\lambda^{\s}|\xi|^{2\s})\cC\opbw(|\xi|^2)U,\cC U\rangle+\langle R_{2\s+3}(\U)U,\cC U\rangle\\
&=\|\opbw(|\xi|^2)U\|_{\U,\sigma}+\langle\opbw(\lambda^{\s}|\xi|^{2\s})\cC\opbw(|\xi|^2)U, R_{1}(\U) U\rangle\\
&\quad+\langle R_{2\s+3}(\U)U,\cC U\rangle,
\end{aligned}\end{equation}
where in the last equality we have used the self adjoint character of $\opbw(|\xi|^2)$ and  $\opbw(|\xi|^2)\cC-\cC\opbw(|\xi|^2)=R_{1}(\U)$, with $\|R_{1}(\U)\|_{\s}\leq C_{\Theta}\|U\|_{\s+1}$ for any $\s\geq 0$. By means of the duality inequality $\langle\cdot,\cdot\rangle\leq \|\cdot\|_{-\s-1/2}\|\cdot\|_{\s+1/2}$, the action Theorem \ref{azione}, the estimates on the semi norms \eqref{semi-D}, \eqref{semi-D1}, we infer that
\begin{equation}\label{G1}
\langle\opbw(\lambda^{\s}|\xi|^{2\s})\cC\opbw(|\xi|^2)U, R_{1}(\U) U\rangle\leq C_{\Theta}\|U\|_{\sigma+3/2}^2.
\end{equation}
Analogously, by using $\langle\cdot,\cdot\rangle\leq \|\cdot\|_{-\s-3/2}\|\cdot\|_{\s+3/2}$ we obtain 
\begin{equation}\label{G2}
\langle R_{2\s+3}(\U)U,\cC U\rangle\leq C_{\Theta}\|U\|^2_{\s+3/2}.
\end{equation}
Recalling \eqref{G0}, we use the left inequality in Lemma \ref{equivalenza} and inequalities \eqref{G1}, \eqref{G2} to infer that
\begin{equation*}
\langle\opbw(\lambda^{\s}|\xi|^{2\s})\cC \Delta^2U,\cC U\rangle\geq C_r^{-1}\|U\|_{\s+2}^2-\|U\|_{\s}^2-C_{\Theta}\|U\|^2_{\s+3/2},
\end{equation*}
from which one obtains the thesis by using the  interpolation inequality
$\|U\|^2_{\s+3/2}\leq \eta \|U\|^2_{\s+2}+\eta^{-1}\|U\|^2_{\s}$ with $\eta$ small enough.
\end{proof}

In the following we prove  the \emph{energy estimates} on the solutions of the linear problem \eqref{lineare}.
\begin{proposition}\label{energy} Let $\U$ satisfy \eqref{piccolezze}. For any $\sigma>0$ there exist constants $C_r>0$ and $C_{\Theta}>0$ such that the unique solution of \eqref{lineare} fullfills
\begin{equation*}
\|U^{\epsilon}\|_{\sigma}^2\leq C_r\|U_0\|_{\sigma}^2+C_{\Theta}\int_0^t\|U^{\epsilon}(\tau)\|_{\s}^2d\tau, \quad \mbox{for all} \,\,\,t\in[0,T).
\end{equation*}
As a consequence one also has
\begin{equation}\label{gro}
\|U^{\epsilon}\|_{\sigma}\leq C_re^{C_{\Theta}t}\|U_0\|_{\sigma}, \quad \mbox{for all} \,\,\, t\in[0,T).
\end{equation}
\end{proposition}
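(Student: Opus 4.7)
The plan is to differentiate the modified energy $\|U^{\epsilon}\|_{\U,\s}^2=\langle \opbw(\lambda^s|\xi|^{2s})\cC U^{\epsilon},\cC U^{\epsilon}\rangle$ in time, bound its derivative by $C_{\Theta}\|U^{\epsilon}\|_{\s}^2$, and then combine Lemma \ref{equivalenza} with Gronwall's inequality to conclude. By the product rule, $\tfrac{d}{dt}\|U^{\epsilon}\|_{\U,\s}^2$ splits into three kinds of contributions: (i) terms in which $\partial_t$ hits the $\U$-dependent symbols inside $\opbw(\lambda^s|\xi|^{2s})$ or $\cC$; (ii) the two symmetric terms coming from $\partial_t U^{\epsilon}=\ii E\opbw(A_2+A_1)U^{\epsilon}$; and (iii) the viscosity contribution $-\epsilon \Delta^2 U^{\epsilon}$.

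For (i), the hypothesis $\|\partial_t \U\|_{L^{\infty}H^{s_0+1}}\leq \Theta$ combined with the seminorm estimates \eqref{semi-S}, \eqref{semi-D}, \eqref{semi-D1} and Theorem \ref{azione} shows that each such term is bounded by $C_{\Theta}\|U^{\epsilon}\|_{\s}^2$. For (iii), Lemma \ref{garding} gives
\[
-\epsilon\langle \opbw(\lambda^s|\xi|^{2s})\cC\Delta^2 U^{\epsilon},\cC U^{\epsilon}\rangle-\epsilon\langle \opbw(\lambda^s|\xi|^{2s})\cC U^{\epsilon},\cC\Delta^2 U^{\epsilon}\rangle\leq -2\epsilon C_r\|U^{\epsilon}\|_{\s+2}^2+2\epsilon C_{\Theta}\|U^{\epsilon}\|_{\s}^2,
\]
so the negative term can be dropped while the positive one is absorbed into $C_{\Theta}\|U^{\epsilon}\|_{\s}^2$ (assuming $\epsilon\leq 1$, say). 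In particular the bound one obtains is uniform in $\epsilon$.

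The core of the argument is (ii). Using Lemma \ref{paramatrice} to turn $\opbw(S+S_*)\opbw(S^{-1})$ into the identity modulo a $-2$-smoothing remainder, and then successively applying Proposition \ref{prop:diago2} and Lemma \ref{diago1}, one derives an intertwining identity of the form
\[
\cC\cdot \ii E\opbw(A_2+A_1)U^{\epsilon}=\ii\bigl(E\opbw(\lambda(x,\xi)|\xi|^2)+\opbw(a_1^+(x,\xi))\bigr)\cC U^{\epsilon}+\widetilde{R}(\U)U^{\epsilon},
\]
where $\widetilde{R}(\U)\colon\cH^{\s}\to\cH^{\s}$ has norm $\leq C_{\Theta}$. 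Setting $V:=\cC U^{\epsilon}$, the contribution of the diagonalized operator to the energy reduces, after invoking the self-adjointness (up to smoothing) of $\opbw(\lambda|\xi|^2)$ and $\opbw(a_1^+)$ together with $E^*=E$, to the pairing of $V$ against the commutators $[\opbw(\lambda^s|\xi|^{2s}),E\opbw(\lambda|\xi|^2)]$ and $[\opbw(\lambda^s|\xi|^{2s}),\opbw(a_1^+)]$. For the first, the Poisson bracket $\{\lambda^s|\xi|^{2s},\lambda|\xi|^2\}$ vanishes identically since the two symbols are functions of each other, hence Theorem \ref{compo} with $\rho=2$ yields an operator of order $2\s$. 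For the second, Theorem \ref{compo} with $\rho=2$ gives again an operator of order $2\s$. In both cases the resulting bilinear form is controlled by $C_{\Theta}\|V\|_{\s}^2\lesssim C_{\Theta}\|U^{\epsilon}\|_{\s}^2$.

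Collecting the bounds yields $\tfrac{d}{dt}\|U^{\epsilon}\|_{\U,\s}^2\leq C_{\Theta}\|U^{\epsilon}\|_{\s}^2$. Integration in time, together with Lemma \ref{equivalenza} on both sides, transfers the estimate to the standard Sobolev norm; the residual $\|U^{\epsilon}(t)\|_{-2}^2$ appearing in the lower bound of Lemma \ref{equivalenza} is handled by a preliminary analogous estimate at $L^2$-level, where $\ii E\opbw(A_2+A_1)$ is skew-adjoint modulo bounded operators and the diagonalization is unnecessary. A final application of Gronwall's inequality then yields \eqref{gro}. The main obstacle is the intertwining identity and the careful bookkeeping of paradifferential orders: after conjugation by $\cC$ the residual remainders must lie in classes where Theorem \ref{azione} gives the $\|U^{\epsilon}\|_{\s}^2$ bound with constants depending only on the low norm $\|\U\|_{s_0+3}$, and the off-diagonal subprincipal piece of $A_1^+$ must be genuinely eliminated by Lemma \ref{diago1} so that only the real-valued self-adjoint symbol $a_1^+$ remains to be controlled by a commutator.
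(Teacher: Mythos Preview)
Your proposal follows the paper's strategy almost exactly: differentiate the modified energy, dispose of the symbol--time--derivative terms via Theorem~\ref{azione}, handle the viscosity via Lemma~\ref{garding}, and for the dynamical part conjugate to a diagonal operator and exploit the vanishing of $\{(\lambda|\xi|^2)^\s,\lambda|\xi|^2\}$.

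There is, however, a genuine gap in your intertwining step. The chain Lemma~\ref{paramatrice} $\Rightarrow$ Proposition~\ref{prop:diago2} $\Rightarrow$ Lemma~\ref{diago1} yields
\[
\cC\,\ii E\opbw(A_2+A_1)=\ii\bigl(E\opbw(\lambda|\xi|^2)+\opbw(a_1^+)\bigr)\,\opbw(S^{-1})+R_0(\U),
\]
with $\opbw(S^{-1})$ on the right, \emph{not} $\cC$. Passing from $\opbw(S^{-1})$ to $\cC=(\Id-\opbw(C))\opbw(S^{-1})$ costs the term $\ii\cD\,\opbw(C)\opbw(S^{-1})$ with $\cD:=E\opbw(\lambda|\xi|^2)+\opbw(a_1^+)$; since $\opbw(C)$ is of order $-1$ while $\cD$ is of order $2$, this leftover has order $1$ and cannot be absorbed into a bounded $\widetilde R(\U)$ as you claim. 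Consequently your clean reduction to the commutators $[\opbw(\lambda^\s|\xi|^{2\s}),\cdot]$ acting on $V=\cC U^\epsilon$ is not justified.

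The paper avoids this by keeping the correct identity and performing the skew-adjointness/commutator argument with $V:=\opbw(S^{-1})U^\epsilon$ in one slot and $\cC U^\epsilon=V-\opbw(C)V$ in the other. This produces, in addition to $\langle\ii[\opbw(\lambda^\s|\xi|^{2\s}),\cA]V,V\rangle$, cross-terms carrying an extra factor $\opbw(C)$; the paper declares these ``similar and lower order''. That claim is correct but not entirely trivial: the naive count gives an order-$(2\s+1)$ bilinear form, and one needs the algebraic identity $C^*E=EC$ (a consequence of $C$ being off-diagonal with $c(x,-\xi)=-c(x,\xi)$, which follows from the oddness of $b_1^+$ stated in Proposition~\ref{prop:diago2}) to see that the principal symbol of the symmetrized cross-term vanishes, leaving an order-$2\s$ remainder controlled by $C_\Theta\|U^\epsilon\|_\s^2$. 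Once this is observed, the rest of your outline (including the Gronwall step) goes through; for the residual $\|U^\epsilon\|_{-2}^2$ the paper simply uses $\|\partial_t U^\epsilon\|_{-2}\le C_\Theta\|U^\epsilon\|_\s$ in place of your separate $L^2$-estimate.
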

\begin{proof}
We take the time derivative of the energy \eqref{modifica} along the solutions of the equation \eqref{lineare}. We obtain
\begin{equation}\label{primaderivata}\begin{aligned}
\tfrac{d}{dt}\|U^{\epsilon}\|_{\U,\s}^2= &\langle \opbw(\tfrac{d}{dt}(\lambda(x,\xi)^{\s})|\xi|^{2\s}\cC U^{\epsilon},\cC U^{\epsilon}\rangle\\
&+\langle \opbw(\lambda(x,\xi)^{\s}|\xi|^{2\s})\cC_t U^{\epsilon},\cC U^{\epsilon}\rangle+\langle \opbw(\lambda(x,\xi)^{\s}|\xi|^{2\s})\cC U^{\epsilon},\cC_t U^{\epsilon}\rangle\\
&+\langle \opbw(\lambda(x,\xi)^{\s}|\xi|^{2\s})\cC U^{\epsilon}_t,\cC U^{\epsilon}\rangle+\langle \opbw(\lambda(x,\xi)^{\s}|\xi|^{2\s})\cC U^{\epsilon},\cC U^{\epsilon}_t\rangle.
\end{aligned}\end{equation}
The first summand in the r.h.s. of \eqref{primaderivata} is bounded from above by $C_{\Theta}\|U^{\epsilon}\|_\sigma^2$, indeed, by using   \eqref{piccolezze}
\begin{equation*}
|\partial_t\lambda(x,\xi)|_{0,s_0,\alpha}\lesssim \mathtt{C}(\|\partial_{t}\U\|_{s_0})\leq C_{\Theta},
\end{equation*}
therefore, thanks also to \eqref{semi-S} and \eqref{semi-D1} one is in position to use Theorem \ref{azione} together with the duality inequality $\langle\cdot,\cdot\rangle\leq\|\cdot\|_{-\s}\|\cdot\|_{\s} $. With an analogous reasoning one may bound from above the second line in \eqref{primaderivata} by $C_{\Theta}\|U^{\epsilon}\|_\sigma^2$.\\
We focus on the third line of \eqref{primaderivata}, by using the equation \eqref{lineare} and setting $A:=A_2(\U;x,\xi)+A_1(\U;x,\xi)$, we get
\begin{align}
\langle& \opbw(\lambda^{\s}|\xi|^{2\s})\cC U^{\epsilon}_t,\cC U^{\epsilon}\rangle+\langle \opbw(\lambda^{\s}|\xi|^{2\s})\cC U^{\epsilon},\cC U^{\epsilon}_t\rangle\nonumber\\
=&\langle\opbw(\lambda^{\s}|\xi|^{2\s})\cC\ii E \opbw(A)U^{\epsilon},\cC U^{\epsilon}\rangle+\langle\opbw(\lambda^{\s}|\xi|^{2\s})\cC U^{\epsilon},\cC \ii E \opbw(A)U^{\epsilon}\rangle\label{freschi}\\
&- \langle\opbw(\lambda^{\s}|\xi|^{2\s})\cC \epsilon\Delta^2U^{\epsilon},\cC U^{\epsilon}\rangle-\langle \opbw(\lambda^{\s}|\xi|^{2\s})\cC U^{\epsilon},\cC \epsilon\Delta^2U^{\epsilon}\rangle\label{calorosi}.
\end{align}
Concerning \eqref{calorosi} we may use Lemma \ref{garding} and bound it from above by
\begin{equation*}
-C_r\|U^{\epsilon}\|_{\s+2}^2+C_{\Theta}\|U^{\epsilon}\|_{\s}^2\leq C_{\Theta}\|U^{\epsilon}\|_{\s}^2.
\end{equation*}
In \eqref{freschi} we have to see a cancellation. By Lemma \ref{paramatrice} we have
\begin{equation*}
\begin{aligned}
\eqref{freschi}&= \langle\ii\opbw(\lambda^{\s}|\xi|^{2\s})\cC E \opbw(A)\opbw(S+S_{*})\opbw(S^{-1})U^{\epsilon},\cC U^{\epsilon}\rangle\\
&+\langle\opbw(\lambda^{\s}|\xi|^{2\s})\cC U^{\epsilon},\ii\cC  E \opbw(A)\opbw(S+S_{*})\opbw(S^{-1})U^{\epsilon}\rangle
\end{aligned}\end{equation*}
modulo a remainder which, by means of \eqref{resto-paramatrice}, Cauchy-Schwarz inequality and Theorem \ref{azione}, is bounded from above by $C_{\Theta}\|U^{\epsilon}\|_{\sigma}^2$. Recalling \eqref{C-op}, we are in position to use Prop. \ref{prop:diago2} and Lemma \ref{diago1} and obtain
\begin{align*}
\eqref{freschi}&=\langle \ii  \opbw(\lambda^{\s}|\xi|^{2\s}) \opbw(E\lambda|\xi|^{2}+\Id a_1^+\big)\opbw(S^{-1})U^{\epsilon}, \cC U^{\epsilon}\rangle\\
&+\langle \opbw(\lambda^{\s}|\xi|^{2\s})\cC U^{\epsilon}, \ii  \opbw(E\lambda|\xi|^{2}+\Id a_1^+\big)\opbw(S^{-1})U^{\epsilon}\rangle.
\end{align*}
modulo remainders which are bounded from above by $C_{\Theta}\|U^{\epsilon}\|_{\sigma}^2$. 
Recalling \eqref{C-op} and using the skew-adjoint character of the operator $ \ii  \big(\opbw(E\lambda|\xi|^{2}+\Id a_1^+\big)$, we may write 
\begin{align*}
&\eqref{freschi}=\Big\langle \ii  \Big[\opbw(\lambda^{\s}|\xi|^{2\s}), \cA\Big]\opbw(S^{-1})U^{\epsilon}, \opbw(S^{-1})U^{\epsilon}\Big\rangle-\\
&\Big\langle \ii  \Big[\opbw(\lambda^{\s}|\xi|^{2\s}), \cA\Big]\opbw(C)\opbw(S^{-1})U^{\epsilon}, \opbw(C)\opbw(S^{-1})U^{\epsilon}\Big\rangle,
\end{align*}
where we have defined $\cA:=\opbw(E\lambda|\xi|^{2}+\Id a_1^+)$.
We discuss the first line in the above equation, the second one is similar and lower order. By using the symbolic calculus, i.e. Theorem \ref{compo}, with $\rho=2$ we obtain 
\begin{equation*}
\Big\langle\Big[\Id\opbw(\lambda^{\s}|\xi|^{2\s}), \cA\Big]V,V\Big\rangle= \Big\langle\opbw\big(E\big\{\lambda^{\s}|\xi|^{2\s},\lambda|\xi|^{2}\big\}\big)V,V\Big\rangle,
\end{equation*}
where $V:=\opbw(S^{-1})U^{\epsilon}$ and modulo remainders bounded by $C_{\Theta}\|U^{\epsilon}\|_{\s}^2$. We conclude the proof noticing that the above Poisson bracket equals to zero.\\
We eventually obtained $\tfrac{d}{dt}\|U^{\epsilon}\|_{\U,\s}^2\leq C_{\Theta}\|U^{\epsilon}\|^2_{{\sigma}}$, integrating over the time interval $[0,t)$ we obtain
\begin{equation*}
\|U^{\epsilon}\|_{\U(t),\s}^2\leq \|U^{\epsilon}(0)\|_{\U(0),\s}^2+C_{\Theta}\int_0^{t}\|U^{\epsilon}(\tau)\|_{{\sigma}}^2d\tau\leq C_r \|U^{\epsilon}(0)\|_{{\sigma}}^2+C_{\Theta}\int_0^{t}\|U^{\epsilon}(\tau)\|_{{\sigma}}^2d\tau.
\end{equation*}
We now use \eqref{equivalenza} and the fact that $\|\partial_{t}U^{\epsilon}\|_{{-2}}\leq C_{\Theta}\|U^{\epsilon}\|_{{0}}\leq C_{\Theta} \|U^{\epsilon}\|_{{\sigma}}$ since $\sigma\geq 0.$
\end{proof}
We are in position to state and prove the following linear well posedness result.
\begin{proposition}\label{ex-lineare}
Let $\Theta\geq r>0$ and $\U$ be a function in $C^0([0,T); \cH^{s_0+3})\cap C^1([0,T); \cH^{s_0+1})$, satisfying \eqref{piccolezze} with $s_0>d/2$. Let $\sigma\geq 0$ and $R(t)$  be a function in $C^0([0,T);\cH^{\s})$. Then there exists a unique solution 
$V$ in $C^0([0,T); \cH^{\s})\cap C^1([0,T); \cH^{\s-2})$ of the linear inhomogeneous problem
\begin{equation}\label{lineare-tot}
\partial_t V=\ii E\opbw\big(A_{2}(\U;x,\x)+A_{1}(\U;x,\x)\big)V+
R(t)\,,
\end{equation}
with initial condition $V(0,x)=V_0(x)\in \cH^{\s}$. Furthermore the solution satisfies 
\begin{equation}\label{stima-lineare}
\|V\|_{L^{\infty}H^{\s}}\leq C_{r,\s} e^{C_{\Theta,\s}T}\|V_0\|_{\s}+TC_{\Theta,\s}e^{C_{\Theta,\s}T}\|R(t)\|_{L^{\infty}H^{\s}}.
\end{equation}
\end{proposition}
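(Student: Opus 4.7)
The plan is to build $V$ as the limit of the parabolically regularized problems
\begin{equation*}
\partial_t V^{\epsilon}=\ii E\opbw\big(A_{2}(\U;x,\x)+A_{1}(\U;x,\x)\big)V^{\epsilon}-\epsilon\Delta^2V^{\epsilon}+R(t)\,, \qquad V^{\epsilon}(0)=V_0\,,
\end{equation*}
following the same strategy used in Lemma \ref{e-fix} and Proposition \ref{energy} but now tracking an extra inhomogeneous term. First I would repeat the fixed-point argument of Lemma \ref{e-fix}: the Duhamel representation picks up a new contribution $\int_0^t e^{-\epsilon(t-s)\Delta^2}R(s)\,ds$ which is bounded in $\cH^{\sigma}$ by $T_\epsilon\|R\|_{L^\infty\cH^\sigma}$, so a contraction in a suitable ball of $C^0([0,T_\epsilon);\cH^\sigma)$ gives local existence for every $\epsilon>0$.

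Second, I would redo the computation of $\tfrac{d}{dt}\|V^\epsilon\|_{\U,\sigma}^2$ exactly as in the proof of Proposition \ref{energy}. All terms coming from the paradifferential part are handled there; the only novelty is the pairing with $R(t)$, namely
\begin{equation*}
\big\langle\opbw(\lambda^\sigma|\xi|^{2\sigma})\cC R(t),\cC V^\epsilon\big\rangle+\big\langle\opbw(\lambda^\sigma|\xi|^{2\sigma})\cC V^\epsilon,\cC R(t)\big\rangle\,,
\end{equation*}
which by Theorem \ref{azione}, the seminorm bounds \eqref{semi-D}--\eqref{semi-D1}, and the norm equivalence in Lemma \ref{equivalenza} is controlled by $C_r\|R(t)\|_\sigma\|V^\epsilon\|_\sigma$. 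Combining this with the bounds already obtained for all the other summands yields
\begin{equation*}
\tfrac{d}{dt}\|V^\epsilon\|_{\U,\sigma}^2\leq C_\Theta\|V^\epsilon\|_\sigma^2+C_r\|R(t)\|_\sigma\|V^\epsilon\|_\sigma\,,
\end{equation*}
and integral Gronwall, together with Lemma \ref{equivalenza} (and the trivial bound on $\|V^\epsilon\|_{-2}$ obtained from the equation since $\sigma\geq 0$), produces the estimate \eqref{stima-lineare} uniformly in $\epsilon$. Since the $\cH^\sigma$ norm of $V^\epsilon$ does not blow up, the solution extends to the full interval $[0,T)$.

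Third, I would show that $\{V^\epsilon\}$ is Cauchy as $\epsilon\to 0^+$. For $\epsilon_1>\epsilon_2>0$ the difference $W=V^{\epsilon_1}-V^{\epsilon_2}$ solves
\begin{equation*}
\partial_t W=\ii E\opbw(A_2+A_1)W-\epsilon_1\Delta^2W+(\epsilon_2-\epsilon_1)\Delta^2V^{\epsilon_2}\,, \qquad W(0)=0\,.
\end{equation*}
At regularity $\sigma-4$ the forcing $(\epsilon_2-\epsilon_1)\Delta^2V^{\epsilon_2}$ is bounded in $\cH^{\sigma-4}$ by $|\epsilon_1-\epsilon_2|\cdot\sup_\epsilon\|V^\epsilon\|_{L^\infty\cH^\sigma}$, which is finite by the previous step. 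Applying the same energy argument at regularity $\sigma-4$ gives $\|W\|_{L^\infty\cH^{\sigma-4}}\lesssim T|\epsilon_1-\epsilon_2|$, so $V^\epsilon\to V$ in $L^\infty([0,T);\cH^{\sigma-4})$. Interpolating with the uniform $\cH^\sigma$ bound via \eqref{interpolo} yields convergence in $L^\infty\cH^{\sigma'}$ for every $\sigma'<\sigma$, while weak-$*$ compactness places the limit in $L^\infty\cH^\sigma$. Passing to the distributional limit in the equation identifies $V$ as a solution of \eqref{lineare-tot}, and using the equation gives $\partial_t V\in L^\infty\cH^{\sigma-2}$.

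Finally, I would upgrade $V\in L^\infty\cH^\sigma$ to $V\in C^0\cH^\sigma\cap C^1\cH^{\sigma-2}$ via a Bona--Smith approximation: smooth the initial datum as $V_{0,\delta}=J_\delta V_0$, let $V_\delta$ be the associated solution (which belongs to a higher Sobolev class), and show by the same linear estimate applied at two different regularities that $V_\delta\to V$ in $C^0\cH^\sigma$. Uniqueness is immediate, since the difference of two solutions solves \eqref{lineare-tot} with $R=0$ and $V_0=0$ and the energy estimate \eqref{stima-lineare} forces it to vanish. The main obstacle is the last step: converting the $L^\infty$ regularity in $\cH^\sigma$ obtained from weak-$*$ compactness into strong continuity, which is precisely what the Bona--Smith device is designed for, but one must verify that the constants in the linear estimate for $V_\delta-V_{\delta'}$ do not degenerate as $\delta,\delta'\to 0$.
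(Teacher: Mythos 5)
Your overall strategy — parabolic regularization, a uniform-in-$\epsilon$ energy estimate, and a Cauchy-sequence limit — agrees with the paper's, but you omit the device the paper uses to take the limit directly in the target topology. The paper also mollifies the \emph{initial datum}, setting $V_0^\epsilon:=\chi(\epsilon^{1/8}|D|)V_0$, so that $V_0^\epsilon\in\cH^{\sigma+4}$ with the quantitative bound $\|V_0^\epsilon\|_{\sigma+4}\lesssim\epsilon^{-1/2}\|V_0\|_\sigma$. Propagated through the energy estimate \eqref{gro} at regularity $\sigma+4$, this puts $V^\epsilon$ itself in $\cH^{\sigma+4}$, so the viscous forcing $(\epsilon'-\epsilon)\Delta^2 V^\epsilon$ in the difference equation is bounded in $\cH^\sigma$, not merely $\cH^{\sigma-4}$. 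The Duhamel bound then reads
\[
\|V^\epsilon-V^{\epsilon'}\|_{L^\infty\cH^\sigma}\lesssim\|V_0^\epsilon-V_0^{\epsilon'}\|_\sigma+(\epsilon-\epsilon')\,\epsilon^{-1/2}\|V_0\|_\sigma\,,
\]
with the right-hand side tending to $0$ as $\epsilon,\epsilon'\to 0$. The sequence is therefore Cauchy directly in $C^0([0,T);\cH^\sigma)$; strong continuity in $\cH^\sigma$ and $C^1$ regularity in $\cH^{\sigma-2}$ (read off the equation) follow at once, with no Bona--Smith step and no weak-$*$ compactness argument needed for the linear problem.

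Without this mollification, your third step only produces a Cauchy estimate in $\cH^{\sigma-4}$, which already fails to make sense for $0\leq\sigma<4$ — a range the proposition must cover, since the modified-energy machinery (Lemma \ref{equivalenza}, Lemma \ref{garding}, Proposition \ref{energy}) is formulated only for nonnegative regularity. You then supplement with interpolation, weak-$*$ compactness, and a Bona--Smith upgrade. But that Bona--Smith step \emph{is} an initial-datum mollification for this same linear equation: carrying it out amounts to running the paper's argument from scratch and makes your third and fourth steps redundant. The worry you flag at the end — whether the Bona--Smith constants degenerate — is precisely where this detour becomes costly; the paper sidesteps it entirely by mollifying the datum before, rather than after, the main estimate. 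A minor structural remark: you fold $R(t)$ into the parabolic approximation from the start, whereas the paper first solves the homogeneous problem and appends the inhomogeneity at the very end by Duhamel; either order is acceptable.
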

\begin{proof}
We consider the following smoothed version of the initial condition $V_0$
\begin{equation*}
V_0^{\epsilon}:=\chi(\epsilon^{\frac18}|D|)V_0
:=\mathcal{F}^{-1}(\chi(\epsilon^{\frac18}|\xi|)\hat{V}_0(\xi))\,,
\end{equation*}
where $\chi$ is a $C^{\infty}$ function with compact support being equal to one on $[-1,1]$ 
and zero on $\R\setminus [-2,2]$. 
We consider moreover the smoothed, 
homogeneous version of \eqref{lineare-tot}, i.e. equation \eqref{lineare}. 
All the solutions $V^{\epsilon}$ are defined 
on a common time interval $[0,T)$ with $T>0$ independent on $\epsilon$, because of Proposition \ref{energy}.
We prove that the sequence $V^{\epsilon}$ 
converges to a solution of \eqref{lineare} with 
$\epsilon$ equal to zero both in the initial condition and in the equation.
Let  $0<\epsilon'<\epsilon$, set $V^{\epsilon,\epsilon'}=V^{\epsilon}-V^{\epsilon'}$, then 
\begin{equation}\label{aux}
\partial_tV^{\epsilon,\epsilon'}=\ii E\opbw(A_2(\U;x,\xi)+A_1(\U;x,\xi))V^{\epsilon,\epsilon'}-\epsilon{\Delta^2}V^{\epsilon,\epsilon'}
+{\Delta^2}V^{\epsilon}(\epsilon-\epsilon')\,.
\end{equation}
Thanks to the discussion above there exists the flow $\Phi(t)$ of the equation 
\[
\partial_tV^{\epsilon,\epsilon'}=\ii E\opbw(A_2(\U;x,\xi)+A_1(\U;x,\xi))V^{\epsilon,\epsilon'}-\epsilon{\Delta^2}V^{\epsilon,\epsilon'}\,,
\] 
and it has estimates independent of $\epsilon,\epsilon'$. 
By means of Duhamel formula, we can write the solution of \eqref{aux} in the implicit form
\begin{equation*}
V^{\epsilon,\epsilon'}(t,x)=\Phi(t)(V_0^{\epsilon'}-V_0^{\epsilon})+(\epsilon'-\epsilon)\Phi(t)\int_0^t\Phi(s)^{-1}{\Delta^2}V^{\epsilon}(s,x)ds\,,
\end{equation*}
using  \eqref{gro} we obtain 
$\|V^{\epsilon,\epsilon'}(t,x)\|_{L^{\infty}H^{\sigma}}\leq 
C (\epsilon-\epsilon')\|V_0\|_{ H^{\sigma}}+(\epsilon-\epsilon')\|V_0^{\epsilon}(t)\|_{H^{\sigma+4}}$ . 
We conclude using {the smoothing estimate} $\|V_0^{\epsilon}\|_{H^{\sigma+4}}\leq \epsilon^{-\frac12}\|V_0\|_{H^{\sigma}}$.  
We eventually proved that we have a well defined 
flow of the equation \eqref{lineare-tot} with $\mathfrak{R}(t)=0$. 
The non homogeneous case, i.e. $\mathfrak{R}(t)\neq 0$, 
follows  by using the Duhamel formula.
\end{proof}
\begin{remark}\label{C4}
In the case that the nonlinearity satisfies the hypotheses of Theorem \ref{vjj}, the modified energy in \eqref{C-op}, \eqref{modifica} is defined by the simpler operator 
$\cC:=\opbw(S^{-1}(x,\xi)).$
This  modified energy is similar to the one in \cite{BMM1}. As consequence of this and of Remarks \ref{C1}, \ref{C2}, \ref{C3} the \eqref{piccolezze} may be replaced by
\begin{equation}\label{Cpic}
\|\U\|_{L^{\infty}H^{{s}_0+2}}+\|\partial_t{\U}\|_{L^{\infty}H^{{s}_0}}\leq \Theta,\quad \|{\U}\|_{L^{\infty}H^{{s}_0}}\leq r,
\end{equation}
so that Prop. \ref{ex-lineare} holds for $\U$ satisfying \eqref{Cpic}.
\end{remark}

\section{Nonlinear well posedness} This section is nowadays standard, we mostly follow \cite{BMM1}, analogous reasonings are developed  also in \cite{FIJMPA,FGIM}. 
We set $\cA(\U):=\ii E\opbw(A_2(\U;x,\xi)+A_{1}(\U;x,\xi))$, $U^0(t,x)=U_0(x)$ and we define the following iterative scheme (recall \eqref{QNLS444})
\begin{equation}\label{katozzo}
\cP_n:=\begin{cases}
\partial_t U^n=\cA(U^{n-1})U^n+R(U^{n-1})U^{n-1} \\
U^n(0,x)=U_0(x)
\end{cases},\quad \mbox{for all} \quad n\geq 1.
\end{equation}
In the following lemma we prove that each problem $\cP_n$, admits a unique solution for any $n\geq 1$, and that the sequence of such solutions converges.
\begin{lemma}\label{iterativo}
Let $U_0\in\cH^{s}$, $s>d/2+3$ and set $r=2\|U_0\|_{H^{s_0+1}}$, $s_0>d/2$. There exists $T:=T(s,\|U_0\|_{s_0+3})$ such that for any $n\geq 1$ we have the following.\\
$(S1)_n$- The problem $\cP_n$ admits a unique solution $U^n$ belonging to the functional space $C^0([0,T);\cH^{s})\cap C^1([0,T);\cH^{s-2})$.\\
$(S2)_n$- There exists a constant $C_r>0$ such that, setting $\Theta:= 4C_r\|U_0\|_{s_0+3}$ and $M:= 4C_r\|U_0\|_{s}$  we have for any $1\leq m\leq n$
\begin{align}
&\|U^m\|_{L^{\infty}H^{s_0+1}}\leq r,\label{1}\\
&\|U^m\|_{L^{\infty}H^{s_0+3}}\leq \Theta, \quad \|\partial_tU^m\|_{L^{\infty}H^{s_0+1}}\leq C_r \Theta,\label{2}\\
&\|U^m\|_{L^{\infty}H^{s}}\leq M, \quad\quad \|\partial_tU^m\|_{L^{\infty}H^{s-2}}\leq C_r M\label{3}.
\end{align}
$(S3)_n$- For any $1\leq m\leq n$ we have $\|U^m-U^{m-1}\|_{L^{\infty}H^{s_0+1}}\leq 2^{-m}r$.
\end{lemma}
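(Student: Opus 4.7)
The argument is a standard Kato-type iteration carried out by induction on $n$.

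\textbf{Base case $n=1$.} The operator $\mathcal{A}(U_0)$ is constant in time and the forcing $R(U_0)U_0$ lies in $\mathcal{H}^s$ by \eqref{stimaRRR}. Proposition \ref{ex-lineare}, applied with driver $\mathcal{U} \equiv U_0$, produces a unique $U^1 \in C^0([0,T_1),\mathcal{H}^s) \cap C^1([0,T_1),\mathcal{H}^{s-2})$. The bounds $(S2)_1$ follow from \eqref{stima-lineare} at the three regularity levels $\sigma = s_0+1,\, s_0+3,\, s$, by choosing $T_1$ so small that the prefactor $T_1 C_\Theta e^{C_\Theta T_1}$ is negligible compared to the doubling factors in the definitions of $r,\Theta,M$. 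Property $(S3)_1$ then follows from $U^1(t) - U_0 = \int_0^t \partial_t U^1\,d\tau$, whose $H^{s_0+1}$ norm is at most $T_1 C_r \Theta$, hence below $r/2$ after further shrinking $T_1$.

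\textbf{Inductive step for $(S1)_{n+1}$ and $(S2)_{n+1}$.} Given $(S1)_m$--$(S3)_m$ for $m \leq n$, the driver $U^n$ satisfies the hypotheses \eqref{piccolezze} of Proposition \ref{ex-lineare} with constants $\Theta,r$ independent of $n$. That proposition, together with the forcing bound \eqref{stimaRRR} for $R(U^n)U^n$, yields $U^{n+1}$ on the common interval $[0,T)$, establishing $(S1)_{n+1}$. Inserting the forcing estimate into \eqref{stima-lineare} at $\sigma \in \{s_0+1, s_0+3, s\}$ gives
\[
\|U^{n+1}\|_{L^\infty H^\sigma} \leq C_r e^{C_\Theta T} \|U_0\|_\sigma + T\, C_\Theta e^{C_\Theta T}\, \mathtt{C}(\Theta)\, \|U^n\|_{L^\infty H^\sigma}.
\]
Shrinking $T$, depending only on $s$ and $\|U_0\|_{\mathfrak{s}_0+3}$, closes the bootstrap and recovers $(S2)_{n+1}$ with the same $r,\Theta,M$. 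The time-derivative bounds in \eqref{2}--\eqref{3} read off directly from the equation $\mathcal{P}_{n+1}$ via \eqref{mamma1} and \eqref{stimaRRR}.

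\textbf{Contraction $(S3)_{n+1}$.} This is the main analytic step. Setting $W^n := U^{n+1} - U^n$, subtracting $\mathcal{P}_{n+1}$ from $\mathcal{P}_n$ gives
\[
\partial_t W^n = \mathcal{A}(U^n) W^n + \bigl(\mathcal{A}(U^n) - \mathcal{A}(U^{n-1})\bigr) U^n + R(U^n)U^n - R(U^{n-1})U^{n-1}, \quad W^n(0) = 0.
\]
I would apply Proposition \ref{ex-lineare} at $\sigma = s_0+1$ with driver $\mathcal{U} = U^n$. The first source term is handled by the Lipschitz estimate \eqref{mamma2} applied to $U^n \in H^{s_0+3}$ (so that the second-order symbol difference lands in $H^{s_0+1}$); the second is handled by \eqref{magnaccio}. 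Both are linear in $\|U^n - U^{n-1}\|_{L^\infty H^{s_0+1}}$, with coefficients depending only on the uniformly controlled high norms. Hence
\[
\|W^n\|_{L^\infty H^{s_0+1}} \leq T\, C_\Theta e^{C_\Theta T}\, \|U^n - U^{n-1}\|_{L^\infty H^{s_0+1}},
\]
and shrinking $T$ once more so that the prefactor is $\leq 1/2$ completes the inductive step by the hypothesis $\|U^n - U^{n-1}\|_{L^\infty H^{s_0+1}} \leq 2^{-n} r$.

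The principal obstacle is ensuring that $T$ can be chosen \emph{once and for all, independently of $n$}: this is precisely the role of the high-low regularity splitting built into $(S2)_n$. All paradifferential constants in Proposition \ref{ex-lineare} are monotone functions of the fixed quantities $\Theta,r$, so the same $T$ suffices at every iteration. Strong convergence of $(U^n)$ in $C^0([0,T), H^s)$, needed to pass to the limit in \eqref{katozzo}, is not part of this lemma and is to be recovered subsequently by interpolation between the uniform high-norm bound $(S2)_n$ and the low-norm Cauchy property $(S3)_n$.
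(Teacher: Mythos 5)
Your argument follows the same Kato-type scheme as the paper, and the contraction step $(S3)_n$ is handled exactly as in the paper (via \eqref{mamma2} and \eqref{magnaccio}). However, there is a genuine gap in how you derive the low-norm bound \eqref{1}, i.e.\ $\|U^m\|_{L^{\infty}H^{s_0+1}}\leq r$. You claim, both in the base case and in the inductive step, that this follows from the linear estimate \eqref{stima-lineare} applied at $\sigma=s_0+1$ after shrinking $T$. This does not close. Applying \eqref{stima-lineare} at $\sigma=s_0+1$ gives
\[
\|U^{n+1}\|_{L^\infty H^{s_0+1}} \leq C_{r,\s}\,e^{C_\Theta T}\,\|U_0\|_{s_0+1} + T C_{\Theta,\s} e^{C_\Theta T}\|R(U^n)U^n\|_{L^\infty H^{s_0+1}} = C_{r,\s}\,e^{C_\Theta T}\tfrac{r}{2} + O(T),
\]
and the constant $C_{r,\s}$, which comes from the modified-norm equivalence (Lemma \ref{equivalenza}), is not bounded by $2$. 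So the first term exceeds $r$ no matter how small $T$ is. Shrinking $T$ only controls the $O(T)$ forcing term, not the prefactor on $\|U_0\|_{s_0+1}$. This is precisely why $\Theta$ and $M$ are defined with the extra factor $4C_r$ (so that $C_{r,\s}e^{C_\Theta T}\|U_0\|_{s_0+3}\leq\Theta/2$ for small $T$), whereas $r$ is defined with only a factor $2$ and \emph{no} $C_r$ — one cannot put a $C_r$ inside the definition of $r$ without circularity, since $C_r$ depends on $r$.

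The paper avoids this by deriving \eqref{1} from the contraction property rather than from the linear estimate. In the base case one writes $V^1:=U^1-U_0$, which satisfies a linear equation with \emph{zero} initial data and forcing $R(U_0)U_0+\cA(U_0)U_0$; then \eqref{stima-lineare} (where now the $C_{r,\s}\|V^1(0)\|$ term vanishes) gives $\|V^1\|_{L^\infty H^{s_0+1}}\leq C_rC_\Theta\Theta T e^{C_\Theta T}\leq r/2$ for $T$ small, yielding both $(S3)_1$ and $\|U^1\|_{s_0+1}\leq\|U_0\|_{s_0+1}+r/2\leq r$. For general $n$, \eqref{1} is obtained from $(S3)_n$ by telescoping $\|U^n-U_0\|_{s_0+1}\leq\sum_{m\leq n}2^{-m}r$. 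You do in fact prove $(S3)_n$ correctly, so the fix is short: derive \eqref{1} from $(S3)_n$ (or from the equation for $V^1$ at $n=1$), not from a direct application of \eqref{stima-lineare} at $\sigma=s_0+1$.
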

\begin{proof}
We proceed by induction over $n\geq 1.$ We prove $(S1)_1$ by using  Prop. \ref{ex-lineare} with $R(t):=R(U_0)U_0$. Moreover  by using \eqref{stima-lineare} we have
\begin{equation*}
\|U^1\|_{L^{\infty}{ H^{\s}}}\leq C_{r,\s}e^{C_{\Theta}T}\|U_0\|_{\s}+C_{\Theta}e^{TC_{\Theta}}\|U_0\|_{\s},
\end{equation*}
for any $\sigma\geq s_0+1$. By using the latter inequality with $\s= s_0+3$, the choice of $\Theta$ in the statement and $T>0$ such that $TC_{\Theta} e^{TC_{\Theta}}\leq 1/4$ we obtain the first in \eqref{2} at the level $n=1$. To obtain the second in \eqref{2} one has to use directly the equation \eqref{katozzo} together with the estimates \eqref{stimaRRR}, \eqref{mamma1} and the first in \eqref{2}
The \eqref{3} at the level $n=1$ is obtained similarly. To prove \eqref{1} at the level $n=1$ we note that $V^1:=U^1-U_0$ solves the equation
\begin{equation*}
\partial_t V_1=\cA(U_0)V^1+R(U_0)U_0+\cA(U_0)U_0,
\end{equation*}
with initial condition $V^1(0,x)=0$. Using \eqref{stima-lineare}, \eqref{mamma1} and \eqref{stimaRRR}, we may bound $\|V^1\|_{L^{\infty}H^{\s}}$by $C_rC_{\Theta}\Theta Te^{C_{\Theta}T}$, therefore we prove \eqref{1} at the level $n=1$ and $(S3)_1$ by choosing $T$ in  such a way that $C_rC_{\Theta}\Theta Te^{C_{\Theta}T}\leq r/2$.\\
We now assume that $(S1)_{n-1},$  $(S2)_{n-1},$ $(S3)_{n-1},$ hold true and we prove them at the level $n$. Owing to \eqref{1},\eqref{2}, \eqref{3} at the level $n-1$ we can use Prop. \ref{ex-lineare} in order to show $(S1)_n$. We prove \eqref{2}. By using \eqref{stima-lineare} with $\sigma=s_0+3$, we obtain
\[
\begin{aligned}
\|U^n\|_{L^{\infty}H^{\s}}&\leq C_{r,\s} e^{C_{\Theta}T}\|U_0\|_{\s}+TC_{\Theta,\s}e^{C_{\Theta}T}\|R(U^{n-1})U^{n-1}\|_{L^{\infty}H^{\s}}\\
&\leq e^{C_{\Theta}T}\tfrac{\Theta}{4}+TC_{\Theta} e^{TC_{\Theta}},
\end{aligned}
\]
therefore we obtain the first in \eqref{2} by choosing $T\Theta$ small enough. To obtain the second in \eqref{2}, we can reason as done in the case of \eqref{2} at the level $n=1$. The \eqref{3} is obtained in a similar way, by using Prop. \ref{ex-lineare} with $\s=s$. The \eqref{1} is a consequence of $(S3)_n$. We prove $(S3)_n$. Set $V^n:=U^{n}-U^{n-1}$, we have
\begin{equation*}
\begin{aligned}
&\begin{cases}
\partial_t V^{n}=\cA(U^{n-1})V^n+f_n, \\
V^n(0,x)=0
\end{cases}
\end{aligned}\end{equation*}
where
\begin{equation*}
\begin{aligned}
 f_n:&=\Big[\cA(U^{n-1})-\cA(U^{n-2})\Big]U^{n-1}+R(U^{n-1})U^{n-1}-R(U^{n-2})U^{n-2}.
\end{aligned}\end{equation*}
We note that, by means of \eqref{mamma2} and \eqref{magnaccio} we have
\begin{equation*}
\|f_n\|_{s_0+1}\lesssim C_{\Theta}\|V^{n-1}\|_{s_0+1}.
\end{equation*}
We are in position to use Prop. \ref{ex-lineare} with $\s=s_0+1$ and $R(t):= f_{n}$  
\begin{equation*}
\|V^n\|_{L^{\infty}H^{s_0+1}}\leq TC_{\Theta,\s}e^{C_{\Theta}T}\|f_n\|_{L^{\infty}H^{s_0+1}}\leq e^{C_{\Theta}T}TC_{\Theta}\|V^{n-1}\|_{s_0+1},
\end{equation*}
we conclude again by choosing $T$ small enough.
\end{proof}
\begin{remark}\label{C5}
In view of Remarks \ref{C1}, \ref{C2}, \ref{C3}, \ref{C4} if the nonlinearity satisfies the hypotheses of Theorem \ref{main}, we may replace $s_0+1\rightsquigarrow s_0$ in the statement of Lemma \ref{iterativo}.
\end{remark}
We are now in position to prove the main theorem.
\begin{proof}[proof of Theorems \ref{main} and \ref{vjj}]
We prove Theorem \ref{main}. By means of Prop. \ref{NLSparapara} we know that \eqref{NLS} is equivalent to \eqref{QNLS444}.
As a consequence of Lemma \ref{iterativo} we obtain a Cauchy sequence in $U^n\in C^0([0,T),\cH^{s'})\cap C^1([0,T),\cH^{s'-2})$ for $s>s'\geq s_0+1$. Indeed, for $s'=s_0+1$ this is the $(S3)_n$ and for $s>s'> s_0+1$ we can interpolate $(S3)_n$ and \eqref{3} by means of \eqref{interpolo}. Analogously one proves that $\partial_t U^n$ is a Cauchy sequence in $C^0([0,T),\cH^{s'-2})$. Let $U(t)$ be the limit, in order to show that $U(t)$ solves \eqref{QNLS444} it is enough to prove that 
\[
\cA(U^{n})U^{n}-\cA(U)U+R(U^{n})U^{n}-R(U)U,
\]
converges to $0$ in $L^{\infty}\cH^{s'-2}$, but this is a consequence of Theorem \ref{azione} and contraction estimates \eqref{mamma2}, \eqref{nave101}. The uniqueness may be proved by contradiction with similar computations to the ones performed in Lemma \ref{iterativo}.  Thanks to $\eqref{3}$ we have that $U^n$ is a bounded sequence in $C^0([0,T),\cH^{s})\cap C^1([0,T),\cH^{s-2})$, which implies that $U\in L^{\infty}([0,T),\cH^{s})\cap Lip([0,T),\cH^{s-2})$. To prove that it is actually continuous with the $\cH^s$ topology, as well as the continuity of the solution map, one has to use the Bona-Smith technique \cite{BSkdv} as done in \cite{BMM1} or \cite{FIJMPA}. We do not reproduce the proof here.\\
Concerning the proof of Theorem \ref{vjj} one has to reason exactly in the same way taking into account Remarks \ref{C1}, \ref{C2}, \ref{C3}, \ref{C4}, \ref{C5}.
\end{proof}

\gr{Acknowledgements.} 

F. Iandoli has been partially supported 
by research project PRIN 2017JPCAPN: ``Qualitative and quantitative aspects of nonlinear PDEs" of the 
Italian Ministry of Education and Research (MIUR).

\bibliographystyle{plain}

\end{document}